\def\inte#1{
\displaystyle\mathop{#1\kern0pt}^\circ }
\newcommand{\beqo}{\begin{equation*}}
\newcommand{\eeqo}{\end{equation*}}
\newcommand{\beno}{\begin{eqnarray*}}
\newcommand{\eeno}{\end{eqnarray*}}
\numberwithin{equation}{section}
\let\pa=\partial
\let\al=\alpha
\let\b=\beta
\let\g=\gamma
\let\e=\varepsilon
\let\lam=\lambda
\let\r=\rho
\let\f=\frac
\let\om=\omega
\let\Om=\Omega
\let\tri=\triangle
\let\ep=\varepsilon
\def\cB{{\mathcal B}}
\def\cH{{\mathcal H}}
\def\cM{{\mathcal M}}
\def\cO{{\mathcal O}}
\def\virgp{\raise 2pt\hbox{,}}
\def\cdotpv{\raise 2pt\hbox{;}}
\def\C{\mathop{\mathbb C\kern 0pt}\nolimits}
\def\DD{\mathop{\mathbb D\kern 0pt}\nolimits}
\def\EE{\mathop{{\mathbb E \kern 0pt}}\nolimits}
\def\K{\mathop{\mathbb K\kern 0pt}\nolimits}
\def\N{\mathop{\mathbb N\kern 0pt}\nolimits}
\def\Q{\mathop{\mathbb Q\kern 0pt}\nolimits}
\def\R{\mathop{\mathbb R\kern 0pt}\nolimits}
\def\SS{\mathop{\mathbb S\kern 0pt}\nolimits}
\def\ZZ{\mathop{\mathbb Z\kern 0pt}\nolimits}
\def\T{\mathbb{T}}
\def\P{\mathop{\mathbb P\kern 0pt}\nolimits}
\def\I{\mathop{\mathbb I\kern 0pt}\nolimits}
\def\dv{\mbox{div}}
\def\dive{\mathop{\rm div}\nolimits}
\def\no{\noindent}
\def\na{\nabla}
\newcommand{\beq}{\begin{equation}}
\newcommand{\eeq}{\end{equation}}
\newcommand{\beo}{\begin{equation*}}
\newcommand{\eeo}{\end{equation*}}
\newcommand{\ben}{\begin{eqnarray}}
\newcommand{\een}{\end{eqnarray}}
\newtheorem{thm}{Theorem}[section]
\newtheorem{lem}{Lemma}[section]
\newtheorem{rmk}{Remark}[section]
\newtheorem{prop}{Proposition}[section]
\begin{document}

\title[Global solution to 2D compressible NS equations]
{ Global classical solution to the Cauchy problem of 2D baratropic compressible Navier-Stokes system with large initial data }
\author[J. Huang]{Jingchi Huang}\address[J. HUANG]
{Department of Mathematics, Pennylvania State University\\
University Park, PA 16802, USA} \email{juh43@psu.edu}
\author[C.Wang]{Chao Wang}
\address [C. WANG]
{Beijing International Center for Mathematical Research\\
Peking University, Beijing 100871, P. R. CHINA}
\email{wangchao@amss.ac.cn}
\maketitle
\begin{abstract}
For periodic initial data with initial density, we establish the global existence and uniqueness of strong and classical solutions for the two-dimensional compressible Navier-Stokes equations with no restrictions on the size of initial data provided the shear viscosity is a positive constant and the bulk one is $\lam=\rho^{\b}$ with $\b>1.$
\end{abstract}

\noindent {\sl Keywords:} Compressible  Navier-Stokes equations; global strong solutions; large initial data.

\vskip 0.2cm


\setcounter{equation}{0}
\section{Introduction}

We study the two-dimensional barotropic compressible Navier-Stokes equations which read as follows \beq\label{CNS} \left\{
\begin{array}
{l} \displaystyle \pa_t\rho + \dive(\rho u) = 0,\\
\displaystyle \pa_t (\rho u) + \dive(\rho u \otimes u) + \nabla P= \mu\tri u+\nabla((\mu+\lam)\dive u),
\end{array}
\right. \eeq where $t\geq 0,$ $x=(x_1,x_2)\in \T^2,$  $\rho,$ $u=(u_1, u_2)$ stand for the density and
velocity of the fluid respectively, and the pressure $P$
is given by
\beq\label{pressure}
P(\rho)=R\rho^{\g},\qquad \g>1.
\eeq
The shear viscosity $\mu$ and the bulk one $\lam$ satisfy the following hypothesis:
\beq\label{viscosity}
0<\mu= const,\quad \lam(\rho)=b\rho^{\b},\quad b>0, \quad \b>0.
\eeq
In the sequel, we set $R=b=1$ without losing any generality.

We consider the Cauchy problem with the given initial data $\rho_0$ and $m_0,$ which are periodic with period $1$ in each space direction $x_i,$ $i=1,2,$ i.e., functions defined on $\T^2=\mathbb{R}^2/\mathbb{Z}^2.$ We require that
\beq\label{initial}
\rho(x,0)=\rho_0(x), \quad \rho u(x,0)=m_0(x),\quad x\in\T^2.
\eeq

The compressible Navier-Stokes equations have a very long history. There is a huge literature concerning the theory of the weak solutions to \eqref{CNS}. Hoff proved the global existence of weak solution for the discontinuous initial data with small energy in  \cite{Hoff-JDE, Hoff-JMFM}. For the large initial data, the global existence of weak solution was proved by Lions \cite{Lions} for the isentropic Navier-Stokes equation, i.e. $P=R\rho^\gamma$ for $\gamma\ge \f95.$ Jiang and Zhang \cite{Jiang1,Jiang2} proved the global existence of weak solution for any $\gamma>1$ for the spherically symmetric or axisymmetric initial data. Inspired by the Jiang and Zhang's work,  Feireisl, Novotn\'{y} and
Petzeltov\'{a} \cite{FNP} improved Lions's result to $\gamma>\f32.$ However, the question of the regularity and uniqueness of weak solutions is completely open even in the case of two dimensional space.

Compared to the weak solutions, the results on the strong solutions are much less. 1962, Nash proved the local existence and uniqueness of smooth solution of the system (\ref{CNS}) for smooth initial data without vacuum in \cite{Nash}.
In a seminal paper \cite{MN}, Matsumura and Nishida proved that the solution is global in time
if the initial data is close to equilibrium. However, whether smooth solutions with large initial data
blow up in finite time is an open problem. Xin \cite{Xin} proved that
smooth solution of the full compressible Navier-Stokes equations will blow up in finite time if the initial density has compact support. Recently, Sun, Wang and Zhang \cite{Sun} showed that smooth solution does not blow up
if the upper bound of the density is bounded, see \cite{Sun-ARMA} for the heat-conductive flow.

For the global existence results in \cite{Charve, CMZ-Rev, CMZ-CPAM, Dan-Inve, MN}, the initial density is required to be
close to a positive constant in $L^\infty$ norm, hence precluding the large oscillation of the density at any
point. Recently, Fang and Zhang \cite{FZ} proved the global existence and uniqueness of (\ref{CNS})
for the initial density $\rho_0$ close to a positive constant in $L^2$ norm and $u_0$ small in $L^p$ norm for $p>3,$
hence allowing the density to have large oscillation on a set of small measure. Similar result
has also been obtained by Huang, Li and Xin \cite{Huang} for the initial data with vacuum,
but a compatible condition is imposed on the initial data. Recently, Wang, Wang and Zhang \cite{WWZ} prove the global well-posedness for some classes of large initial data.

For the global well-posedness of the strong solutions with general large initial data, Vaigant and Kazhikhov \cite{VK} proved existence of global solutions for system \eqref{CNS}--\eqref{viscosity} with $\beta>3$. Recently, Huang and Li \cite{HL, HL1} relax the restriction $\beta>3$ to $\beta>\f43.$ In this paper, we aim to relax this restriction to $\beta>1.$

Before stating the main results, we explain the notations and conventions used throughout this paper. We denote
\begin{equation*}
\int f\,dx=\int_{\T^2}\,dx,\quad \bar{f}=\f1{|\T^2|}\int f\,dx.
\end{equation*}
For $1\leq r\leq \infty,$ we also denote the standard Lebesgue and Sobolev spaces as follows:
\begin{equation*}
L^r=L^r(\T^2),\quad W^{s,r}=W^{s,r}(\T^2), \quad H^s=W^{s,2}.
\end{equation*}

\begin{thm}\label{mainthm1} {\sl 
Assume that
\beq\label{thmassume1}
\b>1,\qquad \g>1,
\eeq
and that the initial data $(\rho, u_0)$ satisfies that for some $q>2$,
\beno
0<\rho_0\in W^{1, q}, \quad u_0\in H^2. 
\eeno
Then, the system \eqref{CNS}--\eqref{initial} has a unique global strong solution $(\rho, u )$ satisfying that
\beo
\begin{cases}
\rho\in C([0,T]; W^{1, q}), \quad \rho_t\in C([0,T]; L^2),\\
u\in L^2(0,T; H^3), \quad u\in C([0,T]; H^2),\\
\end{cases}
\eeo
for any $0<T<\infty$.
}
\end{thm}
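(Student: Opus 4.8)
The plan is to combine a local existence theorem with global a priori estimates, closing the argument by continuation. First I would establish local-in-time existence and uniqueness of a strong solution on some interval $[0,T_*]$ in the stated regularity class $\rho\in C([0,T_*];W^{1,q})$, $u\in C([0,T_*];H^2)\cap L^2(0,T_*;H^3)$; since $\rho_0$ is bounded away from $0$, this is classical (linearize, iterate, use parabolic smoothing for the $u$-equation and transport estimates for $\rho$), and the maximal time of existence $T^*$ is characterized by a blow-up criterion, e.g.\ $\lim_{t\to T^*}\big(\|\rho(t)\|_{L^\infty}+\|1/\rho(t)\|_{L^\infty}+\|u(t)\|_{H^2}\big)=\infty$ if $T^*<\infty$. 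The bulk of the work is then to show that none of these quantities blows up in finite time.

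The heart of the argument is the two-dimensional structure with $\lambda(\rho)=\rho^\beta$, $\beta>1$. I would first derive the basic energy identity ($L^\infty_t L^2$ bound on $\sqrt\rho\,u$, $L^2_{t,x}$ bound on $\nabla u$, control of the potential energy $\int \rho^\gamma$) and the total mass conservation $\int\rho\,dx=\int\rho_0\,dx$. The key new ingredient, following the Vaigant–Kazhikhov strategy as refined by Huang–Li, is an upper bound on the density: introduce the effective viscous flux $F=(2\mu+\lambda(\rho))\dive u-P(\rho)$ and the ``material derivative'' quantities, and exploit that in 2D one can control $\|\rho\|_{L^\infty}$ by a bootstrap using the equation $\partial_t(\log\rho)+\cdots$ together with $L^p$ estimates of $\dive u$ obtained from the momentum equation; the condition $\beta>1$ is precisely what makes the dissipation from the $\lambda(\rho)\dive u$ term strong enough to absorb the nonlinear pressure terms after the relevant interpolation/logarithmic Gronwall argument. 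Once $0<\underline{c}\le\rho\le\bar C<\infty$ on $[0,T]$ is known, the lower bound $\rho\ge\underline c$ follows from integrating the transport equation along characteristics with the (now controlled) $\|\dive u\|_{L^1_tL^\infty}$, and one upgrades regularity step by step: first $\|\nabla u\|_{L^\infty_tL^2}$ and $\|\sqrt\rho\,\dot u\|_{L^2_{t,x}}$ from the momentum equation tested against $\dot u$, then $\|\nabla^2 u\|_{L^2_{t,x}}$ and $\|u\|_{L^2_tH^3}$ via elliptic regularity for the Lam\'e operator with variable coefficient $2\mu+\rho^\beta$, and finally $\|\rho\|_{W^{1,q}}$ and $\|\rho_t\|_{L^2}$, and $\|u\|_{L^\infty_tH^2}$, by differentiating the transport equation and using the parabolic estimates for $u$.

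The main obstacle, and where all the difficulty is concentrated, is the density upper bound under the borderline assumption $\beta>1$: the cancellations available when $\beta>\tfrac43$ (Huang–Li) degrade, and one must extract extra integrability from the effective viscous flux and/or a more careful commutator estimate involving $\mathop{\rm div}u$ and the Riesz-type operators in two dimensions (using that $\|\nabla u\|_{L^2}$ controls, via BMO/$L^p$ interpolation, the growth of $\|\rho\|_{L^p}$ at a rate that stays subcritical in the Gronwall argument as $p\to\infty$). Everything else—local existence, lower bound, and the regularity propagation—is then fairly standard once the $L^\infty$ bound on $\rho$ is in hand, and the continuation criterion closes the proof on any $[0,T]$, $T<\infty$.
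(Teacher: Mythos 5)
Your overall architecture coincides with the paper's: local existence for data bounded away from vacuum, the basic energy identity, a Vaigant--Kazhikhov/Huang--Li style upper bound on the density via the effective viscous flux, and a continuation argument through a blow-up criterion in terms of $\|\rho\|_{L^\infty}$. The regularity-propagation steps you describe (testing the momentum equation against $\dot u$, elliptic estimates for the Lam\'e operator, transport estimates for $\nabla\rho$) are exactly Section 4 of the paper.

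However, the proposal has a genuine gap exactly where you acknowledge the difficulty is concentrated: the density upper bound for $\beta>1$. Saying that one must ``extract extra integrability from the effective viscous flux and/or a more careful commutator estimate'' does not constitute an argument, and the whole content of the paper is the specific exponent bookkeeping that makes this close. Concretely, the paper (i) uses the Vaigant--Kazhikhov bound $\sup_t\|\rho\|_{L^p}\le C(T)p^{2/(\beta-1)}$, valid only for $\beta>1$ and the first place the hypothesis enters; (ii) proves a logarithmic energy estimate $\sup_t\log(e+A^2)+\int_0^T B^2/(e+A^2)\,dt\le CR_T^{1+\delta\beta}$ with a free interpolation parameter $\delta\in(0,1)$, where $A^2=\int(\omega^2+G^2/(2\mu+\lambda))\,dx$, $B^2=\int\rho|\dot u|^2\,dx$ and $R_T=1+\sup_t\|\rho\|_{L^\infty}$; (iii) derives a weighted momentum bound $\sup_t\int\rho|u|^{2+\alpha}\,dx\le C$ with $\alpha\sim R_T^{-\beta/2}$ by multiplying the momentum equation by $(2+\alpha)|u|^{\alpha}u$, which lowers the power of $R_T$ appearing in $\|\rho u\|_{L^q}$; and (iv) integrates the identity $\frac{D}{Dt}(2\mu\log\rho+\beta^{-1}\rho^{\beta})+P=\frac{D}{Dt}\psi+\bar P-\bar G+F$, with $\psi=(-\Delta)^{-1}\dive(\rho u)$ and $F=\sum_{i,j}[u_i,R_iR_j](\rho u_j)$ the Riesz commutator, along particle paths, obtaining $R_T^{\beta}\le C(R_T^{1+\delta\beta/2+\epsilon}+1)$ via Brezis--Wainger control of $\|\psi\|_{L^\infty}$ and $\int_0^T\|F\|_{L^\infty}\,dt$; choosing $\delta$ and $\epsilon$ small then closes the bound precisely when $\beta>1$. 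None of steps (ii)--(iv) --- in particular the $\delta$-interpolation in the estimates for $G$ and the $R_T$-dependent exponent $\alpha$ --- appears in your plan, and without them the same scheme only yields the Huang--Li range $\beta>4/3$. So your proposal is a correct road map of the standard parts but leaves the theorem's actual new content unproved.
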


\no{\bf The organization of the paper.} In Section 2, we collect some elementary facts and inequalities which will be needed in later analysis. Section 3 is devoted to derivation of upper bound on the density which is the key to extend the local solution to all time. Based on the previous estimates, higher-order ones are established in Section 4. We also prove the blow-up criterion and the main result, Theorem \ref{mainthm1}, in this Section.

\setcounter{equation}{0}
\section{Preliminaries}
The following well-known local existence theory, where the initial density is strictly away from vacuum, can be found in \cite{SS, Solon}.

\begin{lem}\label{locallem}
{\sl Assume that $(\rho_0, u_0)$ satisfies
\beq\label{localassume1}
\rho_0\in W^{1, q},\quad u_0\in H^2,\quad \inf_{x\in \T^2}\rho_0(x)>0,\quad m_0=\rho_0 u_0.
\eeq
Then there are a small time $T>0$ and a constant $C_0>0$ both depending only on $\|\rho_0\|_{H^2},$ $\|u_0\|_{H^2},$ and $\inf_{x\in \T^2}\rho_0(x)$ such that there exists a unique strong solution $(\rho, u)$ to the problem \eqref{CNS}-\eqref{initial} in $\mathbb{T}^2 \times (0,T)$ satisfying
\beq\label{localconclu1}
\begin{cases}
\rho\in C([0,T]; W^{1, q}), \quad \rho_t\in C([0,T]; L^2),\\
u\in L^2(0,T; H^3), \quad u\in C([0,T]; H^2),
\end{cases}
\eeq
and
\beq\label{localconclu2}
\inf_{(x,t)\in \T^2\times (0,T)}\rho(x,t)>C_0>0.
\eeq}
\end{lem}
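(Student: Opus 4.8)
The plan is to use the classical linearization-and-iteration scheme coupling the transport equation for $\rho$ with the parabolic system for $u$; the point that makes it work is $\inf\rho_0>0$, i.e.\ the absence of vacuum, since then, at frozen $\rho$, the velocity equation in \eqref{CNS} is a uniformly elliptic Lam\'e-type system: the operator $\mathcal L v=\mu\Delta v+\nabla\big((\mu+\lambda(\rho))\dive v\big)$ has principal symbol bounded below because $\mu>0$ and $2\mu+\lambda(\rho)=2\mu+\rho^{\beta}\ge 2\mu>0$, uniformly as long as $\rho$ stays bounded. Concretely, with $(\rho^0,u^0)=(\rho_0,u_0)$ frozen in time, I would define $\rho^{n+1}$ by the linear transport problem $\partial_t\rho^{n+1}+\dive(\rho^{n+1}u^n)=0$, $\rho^{n+1}|_{t=0}=\rho_0$, and $u^{n+1}$ by
\[
\rho^{n+1}\big(\partial_t u^{n+1}+u^n\cdot\nabla u^{n+1}\big)+\nabla P(\rho^{n+1})=\mu\Delta u^{n+1}+\nabla\big((\mu+\lambda(\rho^{n+1}))\dive u^{n+1}\big),\qquad u^{n+1}|_{t=0}=u_0,
\]
each step being solved by standard linear theory (characteristics for the transport equation, Galerkin/semigroup methods for the parabolic one). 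The transport representation $\rho^{n+1}(t,x)=\rho_0(X^n_t(x))\exp\!\big(-\int_0^t(\dive u^n)(s,X^n_s(x))\,ds\big)$ keeps each $\rho^{n+1}$ strictly positive.

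The core of the argument is uniform bounds on a short interval $[0,T_*]$, with $T_*$ and the bounds depending only on $\|\rho_0\|_{W^{1,q}}$, $\|u_0\|_{H^2}$, $\inf\rho_0$. For the density I would use Gr\"onwall on $\|\rho^{n+1}\|_{W^{1,q}}$ driven by $\int_0^t\|\nabla u^n\|_{W^{1,q}\cap L^\infty}$, plus $\|\partial_t\rho^{n+1}\|_{L^2}\lesssim\|\rho^{n+1}\|_{W^{1,\infty}}\|u^n\|_{H^1}+\|\rho^{n+1}\|_{L^\infty}\|\dive u^n\|_{L^2}$; for the velocity, testing the parabolic system against $\partial_t u^{n+1}$ and then bootstrapping via elliptic regularity for $\mathcal L$ gives control of $\sup_{[0,T]}\|u^{n+1}\|_{H^2}^2+\int_0^T\|u^{n+1}\|_{H^3}^2\,dt$ by the data plus a superlinear function of the norms of $(\rho^n,u^n)$. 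An induction then confines, for $T_*$ small enough, all iterates to a fixed ball of the space appearing in \eqref{localconclu1}, with $\inf_n\rho^n\ge C_0>0$.

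Having the uniform bounds, I would show the scheme contracts in the low norm $C([0,T];L^2)$: subtracting consecutive equations and running an energy estimate with the previous step's bounds yields $\sup_{[0,T]}\big(\|\rho^{n+1}-\rho^n\|_{L^2}^2+\|u^{n+1}-u^n\|_{L^2}^2\big)\le C T^{\theta}\sup_{[0,T]}\big(\|\rho^{n}-\rho^{n-1}\|_{L^2}^2+\|u^{n}-u^{n-1}\|_{L^2}^2\big)$ for some $\theta>0$, hence convergence after shrinking $T$. The limit $(\rho,u)$ inherits the high-norm bounds by weak-$*$ lower semicontinuity; interpolation together with the Aubin--Lions lemma upgrades the convergence enough to pass to the limit in every nonlinear term, to recover time continuity into the top spaces $C([0,T];W^{1,q})$ and $C([0,T];H^2)$, and to pass \eqref{localconclu2} to the limit. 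Uniqueness comes from the same difference estimate applied to two genuine solutions, producing a Gr\"onwall inequality with $L^1_t$ coefficient.

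The step I expect to be hardest is the top-order ($H^3$) estimate for $u^{n+1}$ in the uniform-bounds step: the leading coefficient $\mu+\lambda(\rho^{n+1})=\mu+(\rho^{n+1})^{\beta}$ is only as regular as $\rho^{n+1}\in W^{1,q}$, so one cannot simply quote constant-coefficient parabolic regularity; instead one localizes/freezes coefficients and controls the resulting commutators, crucially using that $W^{1,q}(\T^2)\hookrightarrow C^0\cap L^\infty$ and $\nabla\rho^{n+1}\in L^q$ with $q>2$ — which is precisely why the density hypothesis is $W^{1,q}$, $q>2$, rather than merely $H^1$.
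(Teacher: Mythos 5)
The paper does not prove this lemma: it is quoted as known local existence theory from \cite{SS, Solon}, with Remark 2.2 noting only that the constant-viscosity arguments there adapt to $\lambda=\rho^\beta$ after slight modification. Your linearization--iteration scheme (uniform short-time bounds, contraction in the low norm, passage to the limit, uniqueness by the same difference estimate) is precisely the standard route those references take, and you correctly isolate the one point where the adaptation is nontrivial --- the top-order elliptic/parabolic regularity with leading coefficient $\mu+\rho^\beta$ only as regular as $\rho\in W^{1,q}$, $q>2$ --- so the proposal is sound and consistent with the paper's (cited) proof.
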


\begin{rmk}\label{localrmk1}
{\sl It should be mentioned that \cite{SS, Solon} dealt with the case that $\lam=$ const. However, after some slight modifications, their methods can be also be applied to the problem \eqref{CNS}-\eqref{initial}.}
\end{rmk}


The following Poincare-Sobolev and Brezis-Wainger inequalities will be used frequently.
\begin{lem}\label{interpolationlem1}[\cite{BW, Eng, LSU}]
{\sl There exists a positive constant $C$ depending only on $\T^2$ such that every function $u\in H^1(\T^2)$ satisfies for $2<p<\infty,$
\beq\label{psineq}
\|u-\bar{u}\|_{L^p}\leq Cp^{\f12}\|u-\bar{u}\|_{L^2}^{\f2p}\|\na u\|_{L^2}^{1-\f2p}, \quad \|u\|_{L^p}\leq Cp^{\f12}\|u\|_{L^2}^{\f2p}\|u\|_{H^1}^{1-\f2p}.
\eeq
Moreover, for $q>2,$ there exists some positive constant $C$ depending only on $q$ and $\T^2$ such that every function $v\in W^{1,q}(\T^2)$ satisfies
\beq\label{bwineq}
\|v\|_{L^\infty}\leq C\|v\|_{H^1}\log^{\f12}(e+\|\na v\|_{L^q})+C.
\eeq}
\end{lem}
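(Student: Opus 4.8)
\emph{Proof strategy.} Both inequalities are classical and I would establish them by the same mechanism, a dyadic Littlewood--Paley decomposition combined with Bernstein's inequalities, since this makes the dependence of the constants on $p$ and $q$ completely transparent. Write $f=\sum_{j}\Delta_j f$ for the Littlewood--Paley blocks on $\T^2$, so that $\sum_j\|\Delta_j f\|_{L^2}^2\sim\|f\|_{L^2}^2$ and $\sum_j 2^{2j}\|\Delta_j f\|_{L^2}^2\sim\|\nabla f\|_{L^2}^2$. In dimension two Bernstein's inequality supplies the three bounds $\|\Delta_j f\|_{L^p}\le C2^{j(1-2/p)}\|\Delta_j f\|_{L^2}$, $\|\Delta_j f\|_{L^\infty}\le C2^{j}\|\Delta_j f\|_{L^2}$ and $\|\Delta_j f\|_{L^\infty}\le C2^{2j/q}\|\Delta_j f\|_{L^q}$, with $C$ absolute; these are the only analytic inputs.

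For \eqref{psineq} I would apply the decomposition to $f=u-\bar u$, which has vanishing mean so that the $\nabla$-square function controls the relevant sum. Setting $a_j=\|\Delta_j f\|_{L^2}$ and summing the first Bernstein bound gives $\|f\|_{L^p}\le C\sum_{j}2^{j(1-2/p)}a_j$, and I split this sum at an integer $N$. By Cauchy--Schwarz the low part is at most $C\,2^{N(1-2/p)}\|f\|_{L^2}$, while for the high part I write $2^{j(1-2/p)}a_j=2^{-2j/p}\,(2^j a_j)$ and obtain $\big(\sum_{j>N}2^{-4j/p}\big)^{1/2}\|\nabla f\|_{L^2}\le (1-2^{-4/p})^{-1/2}\,2^{-2N/p}\|\nabla f\|_{L^2}$. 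The crucial point is that $(1-2^{-4/p})^{-1/2}\sim c\sqrt p$ as $p\to\infty$, and this is exactly the source of the factor $p^{1/2}$. Hence
\[
\|f\|_{L^p}\le C\,2^{N(1-2/p)}\|f\|_{L^2}+C\sqrt p\,2^{-2N/p}\|\nabla f\|_{L^2},
\]
and optimizing over $N$ (choosing $2^N\sim \|\nabla f\|_{L^2}/(\sqrt p\,\|f\|_{L^2})$) produces the first estimate in \eqref{psineq}. The second estimate follows identically with $u$ in place of $u-\bar u$, replacing $\|\nabla f\|_{L^2}$ by $\|u\|_{H^1}$ and bounding the zero mode by $\|u\|_{L^2}$.

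For the Brezis--Wainger inequality \eqref{bwineq} I would sum the $L^\infty$ Bernstein bounds and again split at $N$. On low frequencies I use $\|\Delta_j v\|_{L^\infty}\le C2^{j}\|\Delta_j v\|_{L^2}$ together with Cauchy--Schwarz over the $N$ terms, $\sum_{0\le j\le N}2^j\|\Delta_j v\|_{L^2}\le C\sqrt N\big(\sum_j 2^{2j}\|\Delta_j v\|_{L^2}^2\big)^{1/2}\le C\sqrt N\,\|v\|_{H^1}$; on high frequencies I use the $L^q$ Bernstein bound and $2^j\|\Delta_j v\|_{L^q}\le C\|\nabla v\|_{L^q}$ to get $\sum_{j>N}2^{-j(1-2/q)}\|\nabla v\|_{L^q}\le C_q\,2^{-N(1-2/q)}\|\nabla v\|_{L^q}$, the geometric series converging precisely because $q>2$. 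Choosing $N\sim \log(e+\|\nabla v\|_{L^q})$ makes the high part $O(1)$ and converts $\sqrt N$ into $\log^{1/2}(e+\|\nabla v\|_{L^q})$, which is exactly \eqref{bwineq}.

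The only genuinely delicate point in either argument is the bookkeeping of constants: one must verify that the geometric factors $(1-2^{-4/p})^{-1/2}$ and $(1-2^{-(1-2/q)})^{-1}$ carry exactly the claimed $p^{1/2}$ growth and $q$-dependence, and one must dispose of the boundary cases in which the optimal cutoff $N$ leaves the admissible range of frequencies. These are harmless: when $N\le 0$ the estimate reduces to the Poincar\'e inequality $\|f\|_{L^2}\le C\|\nabla f\|_{L^2}$ for the mean-zero part, or is absorbed into the additive constant in \eqref{bwineq}, and the regime $p\downarrow 2$ is trivial by interpolation since both sides converge to multiples of $\|f\|_{L^2}$.
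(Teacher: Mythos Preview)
Your argument is correct. The Littlewood--Paley splitting together with Bernstein's inequalities does produce both \eqref{psineq} and \eqref{bwineq} with the stated dependence of the constants on $p$ and $q$; your identification of $(1-2^{-4/p})^{-1/2}\sim c\sqrt p$ as the source of the $p^{1/2}$ factor and the choice $N\sim\log(e+\|\nabla v\|_{L^q})$ for the Brezis--Wainger bound are exactly the right mechanisms, and your remarks on the boundary cases $N\le 0$ and $p\downarrow 2$ dispose of the remaining technicalities.

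Note, however, that the paper does not prove this lemma at all: it is quoted as a known result with references to Br\'ezis--Wainger, Engler, and Lady\v{z}enskaja--Solonnikov--Ural'ceva, so there is no ``paper's own proof'' to compare against. The cited sources in fact use somewhat different arguments --- the original Br\'ezis--Wainger proof goes through real interpolation of Sobolev spaces, and Engler's alternative proof of \eqref{bwineq} proceeds by combining the Gagliardo--Nirenberg inequality with a direct estimate on high-order $L^p$ norms rather than a frequency decomposition. Your dyadic approach is more modern and has the advantage of making the constant tracking completely explicit in a single unified framework; the older proofs are slightly more elementary in that they avoid the Littlewood--Paley machinery, but they require separate arguments for the two inequalities.
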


The following Poincare type inequality can be found in \cite{Fei}.
\begin{lem}\label{interpolationlem2}
{\sl Let $u\in H^1(\T^2),$ and $\rho$ be a non-negative function such that
\begin{equation*}
0<M_1\leq \int\rho\,dx, \quad \int\rho^{\g}\,dx\leq M_2,
\end{equation*}
with $\g>1.$ There exists a positive constant $C$ depending only on $M_1$ and $M_2$ such that
\beq\label{feiineq}
\|u\|^2_{L^2}\leq C\int\rho u^2\,dx+C\|\na u\|^2_{L^2}.
\eeq
}
\end{lem}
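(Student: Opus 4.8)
The plan is to prove the weighted Poincaré-type inequality \eqref{feiineq} by a compactness-contradiction argument, exploiting the lower bound on $\int \rho\,dx$ together with the upper bound on $\int \rho^\g\,dx$ to control the average of $u$. First I would reduce matters to controlling $\|\bar u\|_{L^2} = |\bar u|$ by the right-hand side, since the Poincaré inequality on the torus gives
\beq\label{step-poincare}
\|u - \bar u\|_{L^2}^2 \leq C\|\na u\|_{L^2}^2,
\eeq
and hence $\|u\|_{L^2}^2 \leq 2\|u-\bar u\|_{L^2}^2 + 2|\bar u|^2 \leq C\|\na u\|_{L^2}^2 + 2|\bar u|^2$. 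Thus the whole inequality follows once I show
\beq\label{step-goal}
|\bar u|^2 \leq C\int \rho u^2\,dx + C\|\na u\|_{L^2}^2.
\eeq

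The heart of the matter is \eqref{step-goal}. The intuition is that the mass lower bound $\int\rho\,dx \geq M_1$ prevents $\rho$ from concentrating on a negligible set, so $\int \rho u^2\,dx$ cannot stay small while $u$ has a large nonzero mean. To make this rigorous I would argue by contradiction: suppose no constant $C$ works, then there is a sequence $(u_n, \rho_n)$ with $\rho_n$ satisfying the stated bounds, normalized so that $|\bar u_n| = 1$, while $\int \rho_n u_n^2\,dx + \|\na u_n\|_{L^2}^2 \to 0$. From $\|\na u_n\|_{L^2}\to 0$ and \eqref{step-poincare} we get $u_n - \bar u_n \to 0$ in $L^2$; after passing to a subsequence with $\bar u_n \to c$, $|c|=1$, we obtain $u_n \to c$ in $L^2(\T^2)$ and (up to a further subsequence) almost everywhere. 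The constraints on $\rho_n$ yield weak-$*$ limits in the relevant spaces, and the goal is to pass to the limit in $\int \rho_n u_n^2\,dx \to 0$ to force the limiting mass to vanish, contradicting $\int\rho\,dx \geq M_1 > 0$.

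The main obstacle will be the limit passage in the term $\int \rho_n u_n^2\,dx$, because $\rho_n$ is only bounded in $L^\g$ (no pointwise or strong compactness is assumed) while $u_n^2 \to c^2 = 1$ only in $L^1$ and a.e.\ — so one cannot naively multiply a weak limit by a strong limit without an integrability buffer. I would handle this by using the $L^\g$ bound with $\g>1$ to gain equi-integrability: since $u_n \to c$ in $L^2$ and a.e., $u_n^2$ is bounded in $L^1$ and converges a.e.\ to $1$, and for any measurable set $E$ one estimates $\int_E \rho_n\,dx \leq |E|^{1-1/\g}\big(\int\rho_n^\g\big)^{1/\g} \leq M_2^{1/\g}|E|^{1/\g'}$ by H\"older, giving uniform absolute continuity of the measures $\rho_n\,dx$. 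Combining this equi-integrability with Egorov's theorem (on which $u_n^2 \to 1$ uniformly off a small set) lets me conclude $\int \rho_n u_n^2\,dx \to \int \rho\,dx \cdot 1 \geq M_1$ along a subsequence where $\rho_n \rightharpoonup \rho$ suitably, contradicting that the integral tends to $0$. This equi-integrability step, which converts the $\g>1$ hypothesis into the compactness needed to pass to the limit, is where the assumption $\g>1$ is genuinely used and is the technical crux; the rest is routine.
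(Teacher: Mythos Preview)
Your compactness--contradiction argument is correct, and the equi-integrability via the $L^\gamma$ bound is exactly the right place where $\gamma>1$ enters. One small clean-up: you do not actually need to introduce a weak limit $\rho$ of the sequence $\rho_n$. Once Egorov gives $u_n^2\to 1$ uniformly on $\T^2\setminus E$ with $|E|<\varepsilon$, the H\"older estimate $\int_E\rho_n\,dx\le M_2^{1/\gamma}|E|^{1/\gamma'}$ yields directly
\[
\int \rho_n u_n^2\,dx \;\ge\; \int_{\T^2\setminus E}\rho_n u_n^2\,dx \;\ge\; (1-\delta_n)\Bigl(\int\rho_n\,dx - M_2^{1/\gamma}\varepsilon^{1/\gamma'}\Bigr)\;\ge\; (1-\delta_n)\bigl(M_1 - M_2^{1/\gamma}\varepsilon^{1/\gamma'}\bigr),
\]
which is bounded below by $M_1/2$ for small $\varepsilon$ and large $n$, giving the contradiction without any limit passage in $\rho_n$.

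As for comparison: the paper does not prove this lemma at all --- it is simply quoted from Feireisl's monograph \cite{Fei}. Your argument is the standard compactness proof found there. It may be worth noting that a short direct proof (no contradiction) is also available in this two-dimensional setting: writing $u=\bar u+(u-\bar u)$, expanding $\int\rho u^2\,dx$, and using the Sobolev--Poincar\'e inequality $\|u-\bar u\|_{L^{\gamma'}}\le C\|\nabla u\|_{L^2}$ (valid for all $\gamma'<\infty$ on $\T^2$) together with Young's inequality gives $M_1|\bar u|^2\le 2\int\rho u^2\,dx + C\|\nabla u\|_{L^2}^2$ in one line. Either route is fine.
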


Then, we state the following Beale-Kato-Majda type inequality which was proved in \cite{BKM} when $\dive u\equiv 0$ and will be used later to estimate $\|\na u\|_{L^\infty}$ and $\|\na \rho\|_{L^p}.$
\begin{lem}\label{bkmlem}
{\sl For $2<p<\infty,$ there is a constant $C(p)$ such that  the following estimate holds for all $\na u\in W^{1,p}(\T^2),$
\beq\label{bkmineq}
\|\na u\|_{L^\infty}\leq C(\|\dive u\|_{L^\infty}+\|\mathrm{rot} u\|_{L^\infty})\log (e+\|\na^2 u\|_{L^p})+C\|\na u\|_{L^2}+C.
\eeq
}
\end{lem}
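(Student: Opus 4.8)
The plan is to run the classical Littlewood--Paley argument behind the Beale--Kato--Majda inequality, exploiting the fact that in two dimensions $\na u$ is recovered from $\dive u$ and $\mathrm{rot}\,u$ by a matrix of Fourier multipliers of order zero. Since $\int_{\T^2}\dive u\,dx=\int_{\T^2}\mathrm{rot}\,u\,dx=0$, the Helmholtz decomposition on the torus yields
\[
u=\bar u+\na(-\D)^{-1}\dive u+\na^{\perp}(-\D)^{-1}\mathrm{rot}\,u,
\qquad\text{so that}\qquad
\na u=\cR_1(\dive u)+\cR_2(\mathrm{rot}\,u),
\]
where $\cR_1,\cR_2$ are $2\times2$ matrices whose entries are smooth, homogeneous of degree zero Fourier multipliers (compositions of two Riesz transforms). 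Fixing an inhomogeneous dyadic partition $(\D_j)_{j\ge-1}$ and an integer $N\ge0$ to be chosen, we split
\[
\|\na u\|_{L^\infty}\le\|\D_{-1}\na u\|_{L^\infty}+\sum_{0\le j\le N}\|\D_j\na u\|_{L^\infty}+\sum_{j>N}\|\D_j\na u\|_{L^\infty}=:\mathrm{I}+\mathrm{II}+\mathrm{III}.
\]

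For $\mathrm{I}$, the block $\D_{-1}\na u$ carries only finitely many Fourier modes, so Bernstein's inequality gives $\mathrm{I}\lesssim\|\D_{-1}\na u\|_{L^2}\lesssim\|\na u\|_{L^2}$. For $\mathrm{III}$, since $p>2$ we set $\al:=1-\tfrac2p>0$; combining Bernstein's inequality on $\T^2$ with $\|\D_j\na u\|_{L^p}\lesssim2^{-j}\|\D_j\na^2u\|_{L^p}$ (valid for $j\ge0$) gives
\[
\|\D_j\na u\|_{L^\infty}\lesssim2^{2j/p}\|\D_j\na u\|_{L^p}\lesssim2^{-\al j}\|\na^2u\|_{L^p},
\]
so that summing the geometric series yields $\mathrm{III}\lesssim_{p}2^{-\al N}\|\na^2u\|_{L^p}$.

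The heart of the matter is the middle term $\mathrm{II}$. For each $j\ge0$ we have $\D_j\na u=\D_j\cR_1(\dive u)+\D_j\cR_2(\mathrm{rot}\,u)$, a convolution of $(\dive u,\mathrm{rot}\,u)$ against kernels of the form $\cF^{-1}\!\big(\vf(2^{-j}\xi)\,m(\xi)\big)$ with $m$ one of the smooth degree-zero symbols and $\vf$ a cutoff supported in an annulus away from the origin; by scaling the $L^1$ norm of each such kernel is bounded uniformly in $j$. Hence $\|\D_j\na u\|_{L^\infty}\lesssim\|\dive u\|_{L^\infty}+\|\mathrm{rot}\,u\|_{L^\infty}$ for all $j\ge0$, and therefore $\mathrm{II}\lesssim N\big(\|\dive u\|_{L^\infty}+\|\mathrm{rot}\,u\|_{L^\infty}\big)$. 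Equivalently, one may phrase $\mathrm{I}$--$\mathrm{III}$ together as the logarithmic interpolation inequality $\|\na u\|_{L^\infty}\lesssim_{p}\|\na u\|_{B^0_{\infty,\infty}}\log\!\big(e+\|\na u\|_{W^{1,p}}\big)+\|\na u\|_{L^2}$ and then invoke the boundedness of $\cR_1,\cR_2$ on $B^0_{\infty,\infty}$, which gives $\|\na u\|_{B^0_{\infty,\infty}}\lesssim\|\dive u\|_{L^\infty}+\|\mathrm{rot}\,u\|_{L^\infty}$; note that on $\T^2$ one also has $\|\na u\|_{W^{1,p}}\lesssim\|\na^2u\|_{L^p}$ by the Poincar\'e--Wirtinger inequality, since $\na u$ has zero mean.

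It remains to optimize over $N$. Choosing $N$ to be the smallest nonnegative integer with $2^{\al N}\ge e+\|\na^2u\|_{L^p}$, so that $N\lesssim_{p}\log\!\big(e+\|\na^2u\|_{L^p}\big)$, makes $\mathrm{III}\lesssim_{p}1$ while $\mathrm{II}\lesssim_{p}\big(\|\dive u\|_{L^\infty}+\|\mathrm{rot}\,u\|_{L^\infty}\big)\log\!\big(e+\|\na^2u\|_{L^p}\big)$. Adding $\mathrm{I}$, $\mathrm{II}$ and $\mathrm{III}$ and absorbing the factor $\tfrac1\al=\tfrac{p}{p-2}$ and the integer rounding of $N$ into $C(p)$ and into the additive constant produces exactly \eqref{bkmineq}. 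I expect the only genuinely delicate point to be the uniform-in-$j$ bound on the $L^1$ norm of the frequency-localized Riesz kernels in step $\mathrm{II}$ --- equivalently, the $B^0_{\infty,\infty}\to B^0_{\infty,\infty}$ boundedness of $\cR_1,\cR_2$ --- while the remaining estimates are routine applications of Bernstein's inequality.
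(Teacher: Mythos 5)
Your argument is correct and complete. Note that the paper itself offers no proof of Lemma~\ref{bkmlem}: it is stated as a known Beale--Kato--Majda type inequality with a citation to \cite{BKM} (where the case $\dive u\equiv 0$ is treated), so there is no internal proof to compare against. Your Littlewood--Paley decomposition into low, intermediate and high frequencies, with Bernstein giving the geometric decay $2^{-(1-2/p)j}\|\na^2u\|_{L^p}$ on the high blocks, the uniform-in-$j$ $L^1$ kernel bound for the frequency-localized double Riesz transforms on the middle blocks, and the optimization $2^{\al N}\sim e+\|\na^2u\|_{L^p}$, is exactly the standard modern route to this estimate and yields \eqref{bkmineq} with the stated dependence $C(p)\sim p/(p-2)$. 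The one point you rightly flag as delicate --- the uniform bound $\|\cF^{-1}(\vf(2^{-j}\cdot)m)\|_{L^1}\lesssim 1$ for the degree-zero symbols $m$, transferred to $\T^2$ by Poisson summation --- is indeed the only non-routine ingredient, and your scaling argument (homogeneity of $m$ reduces every $j$ to $j=0$) handles it. The identity $\na u=\cR_1(\dive u)+\cR_2(\mathrm{rot}\,u)$ via the Helmholtz decomposition of the mean-free part is valid on the torus regardless of the sign convention for $\mathrm{rot}$, and the zero mean of $\na u$ justifies both the low-frequency bound by $\|\na u\|_{L^2}$ and the Poincar\'e--Wirtinger step. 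No gaps.
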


Next, let $\tri^{-1}$ denote the Laplacian inverse with zero mean on $\T^2$ and $R_i$ be the usual Riesz transform on $\T^2:$ $R_i=(-\tri)^{-\f12}\pa_i.$ Let $\cH^1(\T^2)$ and $\cB\cM\cO(\T^2)$ stand for the usual Hardy and BMO space:
\begin{equation*}\begin{split}
&\cH^1=\{f\in L^1(\T^2): \|f\|_{\cH^1}=\|f\|_{L^1}+\|R_1f\|_{L^1}+\|R_2 f\|_{L^1}<\infty, \bar{f}=0         \},\\
&\cB\cM\cO=\{f\in L^1_{loc}(\T^2): \|f\|_{\cB\cM\cO}<\infty\},
\end{split}
\end{equation*}
with
\begin{equation*}
\|f\|_{\cB\cM\cO}=\sup_{x\in \T^2, r\in (0,d)} \f1{|\Omega_r(x)|}\int_{\Omega_r(x)}\Big| f(y)-\f1{|\Omega_r(x)|}\int_{\Omega_r(x)} f(z)\,dz\Big| \,dy,
\end{equation*}
where $d$ is the diameter of $\T^2,$ $\Omega_r(x)=\T^2\cap B_r(x),$ and $B_r(x)$ is a ball with center $x$ and radius $r.$ Given a function $b,$ define the linear operator
\begin{equation*}
[b, R_iR_j](f)\triangleq bR_i\circ R_j(f)-R_i\circ R_j(bf),\quad i,j=1,2.
\end{equation*}
The following properties of the commutator $[b, R_iR_j](f),$ which are due to \cite{CM,CRW} respectively, will be useful for our work.
\begin{lem}\label{commutelem}
{\sl Let $b,f\in C^{\infty}(\T^2).$ Then for $p\in (1,\infty),$ there is $C(p)$ such that
\beq\label{commuteineq1}
\|[b, R_iR_j](f)\|_{L^p}\leq C(p)\|b\|_{\cB\cM\cO}\|f\|_{L^p}.
\eeq
Moreover, for $q_i\in (1,\infty)$ ($i=1,2,3$) with $q_1^{-1}=q_2^{-1}+q_3^{-1},$ there is a $C$ depending only on $q_i$ such that
\beq\label{commuteineq2}
\|\na [b, R_iR_j](f)\|_{L^{q_1}}\leq C\|\na b\|_{L^{q_2}}\|f\|_{L^{q_3}}.
\eeq
}
\end{lem}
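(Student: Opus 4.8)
The plan is to treat the two inequalities \eqref{commuteineq1} and \eqref{commuteineq2} separately, working throughout with the smooth data $b,f$ in the statement, so that all kernels, principal values and differentiations under the integral sign are legitimate and the constants are independent of the smoothness. The starting observation is that $T:=R_iR_j$ is a Calder\'on--Zygmund operator on $\T^2$: up to a constant multiple of the identity (which commutes with multiplication by $b$ and so drops out of every commutator), it is convolution with a kernel $K$ that is homogeneous of degree $-2$, smooth off the origin, even, and of vanishing spherical mean. On the torus one either periodizes $K$ --- the periodic correction being smooth, hence harmless --- or invokes the periodic Calder\'on--Zygmund, $A_p$, $\cH^1$ and $\cB\cM\cO$ calculus directly; I will use the latter without further comment. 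Since the identity part cancels, one has in kernel form $[b,T]f(x)=\int K(x-y)(b(x)-b(y))f(y)\,dy$.

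For \eqref{commuteineq1} this is precisely the Coifman--Rochberg--Weiss commutator theorem, and I would reproduce its proof by the analytic-family argument. For $z\in\C$ set $F(z):=e^{zb}T(e^{-zb}f)$, which is holomorphic for fixed smooth $b,f$ and satisfies $F'(0)=bTf-T(bf)=[b,T]f$. By the John--Nirenberg inequality, $e^{p(\Re z)b}$ is an $A_p(\T^2)$ weight with $A_p$-characteristic bounded uniformly for $|z|\le \varepsilon/\|b\|_{\cB\cM\cO}$, and $T$ is bounded on $L^p(\T^2;w)$ with norm controlled by that characteristic; writing out the weighted norm gives $\|F(z)\|_{L^p}\le C_p\|f\|_{L^p}$ on that circle. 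Cauchy's formula $F'(0)=\frac1{2\pi i}\oint F(z)z^{-2}\,dz$ then yields $\|[b,T]f\|_{L^p}\le C_p\|b\|_{\cB\cM\cO}\|f\|_{L^p}$. Equivalently one may run the sharp--maximal--function proof, establishing $M^\#([b,T]f)\lesssim \|b\|_{\cB\cM\cO}(M_r(Tf)+M_rf)$ for some $r>1$ and concluding through the Fefferman--Stein inequality together with the $L^p$-boundedness of $M_r$ and of $T$.

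For \eqref{commuteineq2} the mechanism is that one differentiation, placed on the commutator, is absorbed by the cancellation $b(x)-b(y)=O(|x-y|)$. Differentiating the kernel representation gives $\pa_k[b,T]f(x)=(\pa_k b)(x)(Tf)(x)+I_k(x)$ with $I_k(x):=\int(\pa_kK)(x-y)(b(x)-b(y))f(y)\,dy$. The first term is harmless: by H\"older and the $L^{q_3}$-boundedness of $T$, $\|(\pa_kb)Tf\|_{L^{q_1}}\le\|\na b\|_{L^{q_2}}\|Tf\|_{L^{q_3}}\le C\|\na b\|_{L^{q_2}}\|f\|_{L^{q_3}}$. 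For $I_k$ I would use $b(x)-b(y)=(x-y)\cdot\int_0^1\na b(y+t(x-y))\,dt$ to write $I_k(x)=\int_0^1\int L_k(x-y)\cdot\na b(y+t(x-y))f(y)\,dy\,dt$, where $L_k(z):=(\pa_kK)(z)z$. Although $\pa_kK$ is homogeneous of degree $-3$, the extra factor $z$ restores the critical homogeneity: $L_k$ is homogeneous of degree $-2$, smooth off the origin, and --- as one checks from the explicit second-order Riesz kernel --- of vanishing spherical mean, hence a genuine Calder\'on--Zygmund kernel. Thus, for each fixed $t$, the inner integral is a Calder\'on--Zygmund operator acting on $f$ with the factor $\na b(y+t(x-y))$ inserted, and H\"older together with the $L^{q_3}$-boundedness of $L_k$ gives the bound $C\|\na b\|_{L^{q_2}}\|f\|_{L^{q_3}}$; integrating in $t\in[0,1]$ finishes the estimate.

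The routine parts above are the H\"older splittings and the weighted/periodic Calder\'on--Zygmund facts, which are standard. The genuine obstacle is the last step of \eqref{commuteineq2}: making the bound on $I_k$ uniform in the translation parameter $t$. At the endpoints this is elementary --- $t=0$ gives the operator $L_k$ applied to $(\na b)f$, and $t=1$ gives $\na b$ times $L_kf$ --- but for intermediate $t$ the kernel $(x,y)\mapsto L_k(x-y)\na b(y+t(x-y))$ is no longer of convolution type, so the uniform-in-$t$ estimate must be extracted from the bilinear Calder\'on--Zygmund structure of the map $(b,f)\mapsto I_k$, equivalently from the Coifman--Meyer commutator calculus. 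This is exactly where the precise cancellation of $L_k$ and the borderline homogeneity of $\pa_kK$ are used, and where I would concentrate the real work; everything else is bookkeeping.
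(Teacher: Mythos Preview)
The paper does not prove this lemma at all: it is stated as a quotation of the classical results of Coifman--Rochberg--Weiss \cite{CRW} for \eqref{commuteineq1} and Coifman--Meyer \cite{CM} for \eqref{commuteineq2}, with no argument given. So there is no ``paper's proof'' to match; the relevant question is whether your outline reproduces those references, and it does.

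Your treatment of \eqref{commuteineq1} is exactly the analytic--family proof of \cite{CRW}: the holomorphic family $z\mapsto e^{zb}T(e^{-zb}f)$, John--Nirenberg to control the $A_p$ characteristic of $e^{p(\Re z)b}$ on a disk of radius $\sim\|b\|_{\cB\cM\cO}^{-1}$, weighted Calder\'on--Zygmund boundedness of $T=R_iR_j$, and Cauchy's formula. This is complete as written. The sharp--maximal alternative you mention is the other standard route and is equally acceptable.

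For \eqref{commuteineq2} your decomposition $\pa_k[b,T]f=(\pa_kb)\,Tf+I_k$ and the observation that $L_k(z)=z\,\pa_kK(z)$ is again a Calder\'on--Zygmund kernel are correct (your spherical--mean check is right for the explicit $R_iR_j$ kernel). You are also right that the only nontrivial point is the uniform--in--$t$ bilinear bound for $\int L_k(x-y)\cdot\na b\bigl(y+t(x-y)\bigr)f(y)\,dy$: this operator is, after the change of variables $x\mapsto x$, $y\mapsto y$, a first--order Calder\'on commutator built from the homogeneous degree $-3$ kernel $\pa_kK$, and its $L^{q_2}\times L^{q_3}\to L^{q_1}$ boundedness is precisely the bilinear singular--integral theorem of \cite{CM}. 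Since the paper itself merely cites \cite{CM} for \eqref{commuteineq2}, invoking that theorem here is not a gap but the intended closure; your plan is therefore correct and aligned with what the paper quotes. If you want to avoid citing the bilinear machinery, a self--contained alternative is to run a $T1$/paraproduct decomposition on $\T^2$, but that is longer and buys nothing over the citation.
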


\setcounter{equation}{0}
\section{A Priori Estimates: Upper Bound of the Density}
First, we have the following standard energy inequality.
\begin{lem}\label{energylem1}
{\sl There exists a positve constant $C$ depending only on $\g,$ $T$ $\|\rho_0\|_{L^\g},$ and $\|\rho_0^{\f12} u_0\|_{L^2}$ such that
\beq\label{energyineq}
\sup_{0\leq t\leq T}\int (\rho |u|^2+\rho^\g)\,dx +\int_0^T\int \bigl(\mu|\na u|^2+\lam(\rho)(\dive u)^2\bigr)\,dx\,dt\leq C.
\eeq}
\end{lem}
Next, we state the $L^p$ esitmate of the density due to Vaigant-Kazhikhov \cite{VK}.
\begin{lem}\label{vklem}
{\sl Let $\b>1,$ for any $1<p<\infty,$ there is a positive constant $C(T)$ depending only on $T,\mu,\b,\g,$ and
\begin{equation*}
E_0\triangleq \|\rho_0\|_{L^\infty}+\|\rho_0^{\f12} u_0\|_{L^2}+\|\na u_0\|_{L^2}
\end{equation*}
such that
\beq\label{vkineq}
\sup_{0\leq t\leq T}\|\rho(\cdot, t)\|_{L^p}\leq C(T)p^{\f2{\b-1}}.
\eeq}
\end{lem}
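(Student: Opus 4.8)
I would follow the argument of Vaigant and Kazhikhov \cite{VK}; let me indicate the main steps. First I would introduce the \emph{effective viscous flux} and the vorticity,
\beqo
F\eqdefa(2\mu+\lam(\rho))\dive u-P,\qquad \om\eqdefa\pa_1u_2-\pa_2u_1,
\eeqo
so that the momentum equation in \eqref{CNS} becomes $\rho\dot u=\na F+\mu\na^\perp\om$ with $\dot u\eqdefa\pa_t u+u\cdot\na u$, whence $\D F=\dive(\rho\dot u)$ and $\mu\D\om=\curl(\rho\dot u)$. Since the continuity equation gives $\dive u=-\rho^{-1}(\pa_t\rho+u\cdot\na\rho)$, the definition of $F$ yields the transport relation
\beq\label{planeq1}
\pa_t\rho+u\cdot\na\rho+\f{\rho\,(\rho^\g+F)}{2\mu+\rho^\b}=0,
\eeq
equivalently $\f{D}{Dt}\bigl(2\mu\log\rho+\tfrac1\b\rho^\b\bigr)=-(\rho^\g+F)$ along particle paths; this identity is the bridge between the density and the flux that I would exploit below.

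Multiplying the continuity equation by $p\rho^{p-1}$, integrating over $\T^2$, and using \eqref{planeq1}, one obtains for $p\ge\b$
\beq\label{planeq2}
\f{d}{dt}\int\rho^p\,dx+(p-1)\int\f{\rho^{p+\g}}{2\mu+\rho^\b}\,dx=-(p-1)\int\f{\rho^p}{2\mu+\rho^\b}\,F\,dx.
\eeq
The left-hand side carries a genuine damping term, which for large $\rho$ dominates $c(p-1)\int\rho^{p+\g-\b}\,dx$. For the right-hand side I would split $\T^2$ into $\{\rho\le N\}$ and $\{\rho>N\}$ with $N$ fixed: on the bounded part the integral is $\le C_N\|F\|_{L^1}$, controlled by Lemma \ref{energylem1} and Lemma \ref{interpolationlem2} together with the elliptic identity for $F$, while on $\{\rho>N\}$ one has $\f{\rho^p}{2\mu+\rho^\b}\le\rho^{p-\b}$, and H\"older's inequality with exponents $\tfrac{p+\g-\b}{p-\b}$ and $\tfrac{p+\g-\b}{\g}$ followed by Young's inequality gives
\beqo
\int_{\{\rho>N\}}\rho^{p-\b}|F|\,dx\le\ep\int\rho^{p+\g-\b}\,dx+C_\ep\,\|F\|_{L^{(p+\g-\b)/\g}}^{(p+\g-\b)/\g}.
\eeqo
Choosing $\ep$ small absorbs the first term into the damping term of \eqref{planeq2}, so everything is reduced to a bound for $F$ in $L^q$ with $q=(p+\g-\b)/\g\simeq p/\g$.

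The heart of the proof is therefore an estimate of $\|F\|_{L^q}$ (equivalently of $\rho^\b\dive u$ in $L^q$) in terms of $q$, the energy quantities of Lemma \ref{energylem1}, and $\|\rho\|_{L^{q'}}$ with $q'<p$. I would obtain it by viewing $u$ as the solution of the Lam\'e-type system $\mu\D u+\na((\mu+\lam)\dive u)=\pa_t(\rho u)+\dive(\rho u\otimes u)+\na P$, trading the time derivative for the lower-order flux through the identity $\pa_t\D^{-1}\dive(\rho u)=(F-\bar F)+R_iR_j(\rho u_iu_j)$, and commuting the coefficient $\rho^\b$ past the Riesz transforms by means of the commutator estimate \eqref{commuteineq1}. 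This is exactly where the structure $\lam=\rho^\b$ with $\b>1$ is indispensable: the identity $\int\rho^\b\dive u\,dx=-\tfrac1{\b-1}\f{d}{dt}\int\rho^\b\,dx$ forces $\b>1$ and produces the $\tfrac1{\b-1}$ factors, and the exponent $p+\g-\b$ in the damping term of \eqref{planeq2} must beat the contribution of $F$. Feeding the resulting bound for $\|F\|_{L^q}$ back into \eqref{planeq2}, applying Gr\"onwall's inequality, and organising the whole as an induction propagating the bound from $L^\g$ (controlled by Lemma \ref{energylem1}) to every $L^p$, one closes the estimate; carrying the $p$-dependent constants through this process --- the $\sim q$ growth of the Calder\'on--Zygmund norms and the $\tfrac1{\b-1}$ factors above --- is what produces the stated rate $p^{2/(\b-1)}$.

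The step I expect to be the main obstacle is precisely this flux estimate: one must control $F$, and hence $\na u$, in Lebesgue exponents growing with $p$ \emph{without} any a priori $L^\infty$ bound on $\rho$ --- such a bound being the very goal of this section --- and one must do so while keeping the dependence on $p$ sharp throughout the absorption and Gr\"onwall steps. The hypothesis $\b>1$ is used decisively both to make $\tfrac1{\b-1}$ finite and to make the damping term $\rho^{p+\g-\b}$ in \eqref{planeq2} strong enough to absorb $F$.
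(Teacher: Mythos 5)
The paper contains no proof of this lemma: it is quoted directly from Vaigant--Kazhikhov \cite{VK} (see also \cite{HL, HL1}), so there is no internal argument to compare yours against. Your sketch does follow the correct strategy of that source. The renormalized identity
$\f{d}{dt}\int\rho^p\,dx+(p-1)\int\f{\rho^{p+\g}}{2\mu+\rho^\b}\,dx=-(p-1)\int\f{\rho^p}{2\mu+\rho^\b}\,F\,dx$
is right, the damping term does dominate $c(p-1)\int\rho^{p+\g-\b}\,dx$ where $\rho$ is large, and the H\"older--Young reduction to an $L^q$ bound on the effective viscous flux with $q\sim p/\g$ is sound, as is the observation that $\b>1$ enters both through the $\f1{\b-1}$ factors and through the strength of the damping exponent $p+\g-\b$.

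The gap is that the step you yourself call ``the heart of the proof'' --- the estimate of $\|F\|_{L^q}$ with explicit, sharp dependence on $q$, obtained \emph{without} any a priori $L^\infty$ bound on $\rho$ --- is only described, not carried out, and that is precisely where all of the work of \cite{VK} lies. The exponent $2/(\b-1)$ is not a bookkeeping afterthought: it emerges from a quantitative interplay between the $O(q)$ growth of the Calder\'on--Zygmund and commutator constants in \eqref{commuteineq1}, the $O(p^{1/2})$ constant in the Sobolev inequality \eqref{psineq}, and an iteration in $p$ (propagating the bound from $L^\g$ upward) that must be arranged so that the induced recursion for $\sup_t\|\rho\|_{L^p}$ closes with a power of $p$ rather than something worse. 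Your proposal asserts that ``carrying the $p$-dependent constants through this process is what produces the stated rate,'' but it neither exhibits the recursion nor verifies that it yields $p^{2/(\b-1)}$; as written one cannot even conclude finiteness of $\sup_t\|\rho\|_{L^p}$ for fixed $p$, let alone the stated growth rate. To count as a proof, the flux estimate and the $p$-bookkeeping must be made explicit (or the lemma should simply be cited, as the paper does).
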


To proceed, we denote by
\begin{equation*}
\na^{\perp} =(\pa_2, -\pa_1),\qquad \f{D}{Dt}f=\dot{f}=f_t+u\cdot\na f,
\end{equation*}
where $\f{D}{Dt}f$ is the material derivative of $f.$ Let $G$ and $\omega$ be the effective viscous flux and the vorticity respectively as follows:
\begin{equation*}
G\triangleq (2\mu +\lam(\rho))\dive u-(P-\bar{P}),\quad \omega\triangleq \na^{\perp}\cdot u=\pa_2u_1-\pa_1u_2.
\end{equation*}
Then we rewirte the momentum equation $\eqref{CNS}_2$ as
\beq\label{ueq}
\rho\dot{u}=\na G+\mu\na^{\perp}\omega,
\eeq
which shows that $G$ solves
\beo
\tri G=\dive(\rho \dot{u})=\pa_t(\dive(\rho u))+\dive\dive(\rho u\otimes u).
\eeo
This implies
\beq\label{rhoeq}
G-\bar{G}+\f{D}{Dt}\bigl((-\tri)^{-1}\dive(\rho u)\bigr)=F,
\eeq
where $F$ is a commutator defined by
\beq\label{defiF}
F\triangleq\sum_{i,j=1}^2[u_i, R_iR_j](\rho u_j)=\sum_{i,j=1}^2 u_iR_i\circ R_j(\rho u_j)-R_i\circ R_j(\rho u_iu_j).
\eeq
The mass equation $\eqref{CNS}_1$ leads to
\beo
-\dive u=\f1{\rho}\f{D}{Dt}\rho,
\eeo
which combining with \eqref{rhoeq} gives that
\beq\label{rhoeq2}
\f{D}{Dt}\varphi(\rho)+P=\f{D}{Dt}\psi +\bar{P}-\bar{G}+F,
\eeq
with
\beq\label{defith}
\varphi(\r)\triangleq 2\mu\log\r+\b^{-1}\r^{\b},\quad \psi\triangleq(-\tri)^{-1}\dive(\r u).
\eeq

\begin{lem}\label{energylem2}
{\sl Assume that \eqref{thmassume1} holds. Then there is a constant $C$ depending only on $\mu,\b,\g, T,$ and $E_0$ such that
\beq\label{energyest1}
\sup_{0\leq t\leq T}\log (e+A^2(t))+\int_0^T\f{B^2(t)}{e+A^2(t)}\,dt\leq C R_T^{1+\delta\b}, \quad \delta\in (0,1),
\eeq
where
\beq\label{defiAB}
A^2(t)\triangleq\int\Bigl(\omega^2(t)+\f{G^2(t)}{2\mu+\lam(\rho(t))}\Bigr)\,dx, \quad B^2(t)\triangleq\int\rho(t)|\dot{u}(t)|^2\,dx,
\eeq
and
\beq\label{defiRT}
R_T\triangleq 1+\sup_{0\leq t\leq T}\|\rho(\cdot,t)\|_{L^\infty}.
\eeq
}
\end{lem}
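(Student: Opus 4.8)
\noindent To prove Lemma~\ref{energylem2} the plan is to establish a logarithmic differential inequality
\beo
\f{d}{dt}\log\bigl(e+A^2(t)\bigr)+c\,\f{B^2(t)}{e+A^2(t)}\le\Phi(t),\qquad \int_0^T\Phi(t)\,dt\le CR_T^{1+\delta\b},
\eeo
for some $c>0$ depending only on $\mu$, and then integrate it in $t$; the required bound on the initial value follows from $\rho_0\in L^\infty$ and $u_0\in H^2$, since $A^2(0)\le C(1+R_T^\b)$ so that $\log(e+A^2(0))\le C(1+\b\log R_T)\le CR_T^{1+\delta\b}$. Because $\mu$ is a fixed positive constant, $\int\bigl(\mu\om^2+G^2/(2\mu+\lam)\bigr)\,dx$ is equivalent to $A^2(t)$, so it is enough to produce such an inequality for the former quantity.

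\emph{Step~1 (the basic identity).} Multiplying \eqref{ueq} by $\dot u$ and integrating over $\T^2$, then integrating by parts and using the two--dimensional identities $\dive\dot u=\f{D}{Dt}\dive u+\pa_iu_j\,\pa_ju_i$ and $\na^\perp\!\cdot\dot u=\f{D}{Dt}\om+\om\,\dive u$, the mass equation in the form $\f{D}{Dt}\rho=-\rho\,\dive u$ (whence $\f{D}{Dt}P=-\g P\,\dive u$ and $\f{D}{Dt}(2\mu+\lam)^{-1}=\b\rho^\b(\dive u)(2\mu+\lam)^{-2}$), together with $\dive u=(G+P-\bar P)/(2\mu+\lam)$, one arrives after a lengthy but straightforward computation at
\beo
\f12\f{d}{dt}\int\Bigl(\mu\om^2+\f{G^2}{2\mu+\lam}\Bigr)\,dx+\int\rho|\dot u|^2\,dx=\sum_k I_k .
\eeo
Using $\pa_iu_j\,\pa_ju_i=(\dive u)^2-2\det(\na u)$, the terms $I_k$ fall into three families: the \emph{null--form term} $\int G\,\det(\na u)\,dx$; the \emph{cubic terms} $\int G(\dive u)^2\,dx$, $\int\f{G^2\,\dive u}{2\mu+\lam}\,dx$, $\int\f{G\,\rho^\b(\dive u)^2}{2\mu+\lam}\,dx$ and $\int\om^2\,\dive u\,dx$, each possibly carrying a bounded weight $\rho^\b/(2\mu+\lam)\le1$; and \emph{lower--order terms} with one factor $P-\bar P$ or $\tfrac{d}{dt}\bar P$.

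\emph{Step~2 (the tools).} (i) From $\tri G=\dive(\rho\dot u)$ and $\mu\tri\om=\na^\perp\!\cdot(\rho\dot u)$, immediate consequences of \eqref{ueq}, elliptic regularity on $\T^2$ gives $\|\na G\|_{L^2}+\mu\|\na\om\|_{L^2}\le C\|\rho\dot u\|_{L^2}\le CR_T^{1/2}B$; since $G-\bar G$ has zero mean, also $\|G\|_{\cB\cM\cO}\le C\|\na G\|_{L^2}\le CR_T^{1/2}B$. (ii) \emph{Div--curl structure:} $\det(\na u)=\na u_1\cdot\na^\perp u_2$ is the product of a curl--free and a divergence--free field, hence lies in $\cH^1(\T^2)$ with $\|\det(\na u)\|_{\cH^1}\le C\|\na u\|_{L^2}^2$, and by $\cH^1$--$\cB\cM\cO$ duality (equivalently via the commutator bound \eqref{commuteineq1}) one has $\bigl|\int G\,\det(\na u)\,dx\bigr|\le C\|G\|_{\cB\cM\cO}\|\na u\|_{L^2}^2\le CR_T^{1/2}B\,\|\na u\|_{L^2}^2$. (iii) \emph{Energy bounds:} Lemma~\ref{energylem1} and $\sup_t\|\rho\|_{L^{2\g}}\le C$ from Lemma~\ref{vklem} give $\int_0^T\|\na u\|_{L^2}^2\,dt\le C$, $\int_0^T\!\!\int(2\mu+\lam)(\dive u)^2\,dx\,dt\le C$ and $\sup_t\|P-\bar P\|_{L^2}\le C$; combining these with $\dive u=(G+P-\bar P)/(2\mu+\lam)$ and $\|\na u\|_{L^2}^2=\|\om\|_{L^2}^2+\|\dive u\|_{L^2}^2$ yields two bounds \emph{independent of} $R_T$:
\beq\label{keyAbound}
\int_0^T A^2(t)\,dt\le C,\qquad \|\na u\|_{L^2}^2\le C\bigl(1+A^2(t)\bigr).
\eeq
(iv) For the $\rho^\b$--weighted terms one uses the Gagliardo--Nirenberg and Brezis--Wainger inequalities \eqref{psineq}--\eqref{bwineq} together with the crucial feature of Lemma~\ref{vklem} that $\sup_t\|\rho\|_{L^p}\le C(T)p^{2/(\b-1)}$ grows only polynomially in $p$; this allows a weight $\rho^\b$ to be split by Hölder's inequality between $\|\rho^\b\|_{L^\infty}=R_T^\b$, used only raised to a small power $\delta$, and $\|\rho^\b\|_{L^{1/(1-\delta)}}\le C(T)$ --- which is exactly how the exponent $1+\delta\b$, with $\delta\in(0,1)$ at one's disposal, is produced.

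\emph{Step~3 (assembly, and the main obstacle).} Dividing the identity of Step~1 by $e+A^2$: the null--form term is controlled by (ii) and \eqref{keyAbound} as
\beo
\f{1}{e+A^2}\Bigl|\int G\,\det(\na u)\Bigr|\le \f{CR_T^{1/2}B\,\|\na u\|_{L^2}^2}{e+A^2}\le CR_T^{1/2}B\le \f{c}{8}\,\f{B^2}{e+A^2}+CR_T(e+A^2),
\eeo
and $\int_0^T CR_T(e+A^2)\,dt\le CR_T$ by \eqref{keyAbound}; the cubic terms, after substituting $\dive u=(G+P-\bar P)/(2\mu+\lam)$ and applying (iv), the $\cB\cM\cO$--bound of (i), the inequalities \eqref{psineq}--\eqref{bwineq} and Young's inequality, are reduced modulo an absorbed $\tfrac{c}{8}\tfrac{B^2}{e+A^2}$ to integrands whose time integral is $\le CR_T^{1+\delta\b}$, again using \eqref{keyAbound} and the Hölder splitting of the $\rho^\b$ weights; the lower--order terms are the simplest and contribute at most $CR_T$. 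Collecting everything and choosing $c$ small enough (depending on $\mu$) gives the differential inequality above, and integrating from $0$ to $t$ yields \eqref{energyest1}. The main obstacle is the cubic terms $\int G(\dive u)^2\,dx$ and $\int\f{G^2\,\dive u}{2\mu+\lam}\,dx$: the naive bound $\|G\|_{L^2}^2\le(2\mu+R_T^\b)A^2$ brings in the full power $R_T^\b$, so that a crude Gagliardo--Nirenberg estimate followed by Young's inequality produces $R_T^{\b+1}$ (or worse) on the right, far beyond $R_T^{1+\delta\b}$. Avoiding this requires keeping the weighted quantity $G^2/(2\mu+\lam)$ and the $\cB\cM\cO$--norm of $G$ in play (both of which only see $R_T^{1/2}$ or less), trading $L^\infty$--control of $G$ and $\dive u$ for logarithmic factors via the Brezis--Wainger inequality \eqref{bwineq} (this is the origin of the $\log(e+A^2)$ rather than $A^2$ on the left of \eqref{energyest1}), and replacing $\|\rho\|_{L^\infty}$ by the slowly growing $\|\rho\|_{L^p}$ of Lemma~\ref{vklem} wherever an isolated $\rho^\b$ weight can be peeled off; carrying this out carefully is the technical heart of the lemma.
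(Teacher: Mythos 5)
Your proposal follows essentially the same route as the paper: test \eqref{ueq} against $\dot u$, use the identities for $\dive\dot u$ and $\na^\perp\cdot\dot u$ to get $\f{d}{dt}A^2+2B^2=\sum I_i$, handle the determinant term by the $\cH^1$--$\cB\cM\cO$ duality with $\|\na G\|_{L^2}+\|\na\om\|_{L^2}\le CR_T^{1/2}B$, produce the exponent $1+\delta\b$ by interpolating the weighted quantity $G/\sqrt{2\mu+\lam}$ against a small power $\delta$ of $\|G\|_{L^2}\le CR_T^{\b/2}A$, and divide by $e+A^2$ before integrating in time. The only inaccuracy is a side remark: the logarithm on the left of \eqref{energyest1} comes purely from dividing the Gronwall inequality $\f{d}{dt}A^2+B^2\le CR_T^{1+\delta\b}(1+\|\na u\|_{L^2}^2)(1+A^2)$ by $e+A^2$, not from the Brezis--Wainger inequality \eqref{bwineq}, which is not needed in this lemma (it enters only later, in Lemma~\ref{momentlem} and Proposition~\ref{densityprop}).
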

\begin{proof}
First, direct calculations show that
\beq\label{ueq1}
\na^{\perp}\cdot \dot{u}=\f{D}{Dt}\omega-(\pa_1u\cdot\na)u_2+(\pa_2 u\cdot\na)u_1=\f{D}{Dt}\omega+\omega\dive u,
\eeq
and that
\beq\label{ueq2}
\begin{split}
\dive\dot{u}&=\f{D}{Dt}\dive u+(\pa_1u\cdot\na)u_1+(\pa_2u\cdot\na)u_2\\
&=\f{D}{Dt}(\f{G}{2\mu+\lam})+\f{D}{Dt}(\f{P-\bar{P}}{2\mu+\lam})-2\na u_1\cdot\na^{\perp}u_2+(\dive u)^2.
\end{split}
\eeq

Multiplying \eqref{ueq} by $2\dot{u}$ and integrating the resulting equality over $\T^2,$ we obtain after using \eqref{ueq1} and \eqref{ueq2} that
\beq\label{ueq4}
\begin{split}
\f{d}{dt}&A^2+2B^2\\
=&-\int \omega^2\dive u\,dx+4\int G\na u_1\cdot\na^{\perp}u_2\,dx-2\int G(\dive u)^2\,dx\\
&-\int \f{(\b-1)\lam-2\mu}{(2\mu+\lam)^2}G^2\dive u\,dx+2\b\int \f{\lam(P-\bar{P})}{(2\mu+\lam)^2}G\dive u\,dx\\
&-2\g\int\f{P}{2\mu+\lam}G\dive u\,dx +2(\g-1)\int P\dive u\,dx\int \f{G}{2\mu+\lam}\,dx\\
=&\sum_{i=1}^7 I_i.
\end{split}
\eeq

Now, we estimate each $I_i$ as follows:

First, we deduce from \eqref{ueq} to get that
\begin{equation*}
\tri G=\dive(\rho \dot{u}),\quad \mu\tri \omega=\na^{\perp}\cdot(\rho\dot{u}),
\end{equation*}
and use with standard $L^p$ estimate of elliptic equations to obtain that for any $p\in (1,\infty),$
\beq\label{ellipticest1}
\|\na G\|_{L^p}+\|\na \omega\|_{L^p}\leq C(p,\mu)\|\rho\dot u\|_{L^p}.
\eeq
In particular, we have
\beq\label{ellipticest2}
\|\na G\|_{L^2}+\|\na \omega\|_{L^2}\leq C(\mu)\|\rho\dot u\|_{L^2}\leq C(\mu)R_T^{\f12}B.
\eeq
This combining with \eqref{psineq} gives
\beq\label{omegaest}
\|\omega\|_{L^4}\leq C\|\omega\|_{L^2}^{\f12}\|\na \omega\|_{L^2}^{\f12}\leq C R_T^{\f14}A^{\f12}B^{\f12},
\eeq
which leads to
\beq\label{I1est}
|I_1|\leq C\|\omega\|_{L^4}\|\dive u\|_{L^2}\leq \ep B^2+C(\ep)R_T\|\na u\|_{L^2}^2 A^2.
\eeq

Next, we will use an idea due to \cite{Desj, Pere} to estimate $I_2.$ Noticing that
\begin{equation*}
\mathrm{rot}\na u_1=0,\qquad \dive \na^{\perp}u_2=0,
\end{equation*}
one can derives from \cite{CLMS} that
\beo
\|\na u_1\cdot\na^{\perp}u_2\|_{\cH^1}\leq C\|\na u\|_{L^2}^2.
\eeo
Together with the fact that $\cB\cM\cO$ is the dual space of $\cH^1$ (see \cite{Feff}), we obtain
\beq\label{I2est}
\begin{split}
|I_2|&\leq C\|G\|_{\cB\cM\cO}\|\na u_1\cdot\na^{\perp}u_2\|_{\cH^1}\leq C\|\na G\|_{L^2}\|\na u\|_{L^2}^2\\
&\leq CR_T^{\f12} B\|\na u\|_{L^2}(1+A)\leq \ep B^2+C(\ep)R_T\|\na u\|_{L^2}^2(1+A^2),
\end{split}
\eeq
where in the third inequality we have use \eqref{ellipticest2} and the following simple fact that for $t\in [0,T],$
\beq\label{Aest}
C^{-1}\|\na u(\cdot, t)\|_{L^2}^2-C\leq A^2(t)\leq CR_T^{\b}\|\na u(\cdot, t)\|_{L^2}^2+C
\eeq
due to \eqref{vkineq}.

Next, H\"older's inequality, \eqref{pressure} and \eqref{vkineq} yield that for $\delta\in (0, 1),$
\beq\label{Iest}
\begin{split}
\sum_{i=3}^7|I_i|&\leq C\int |\dive u|\Bigl(|G|\f{|G+P-\bar{P}|}{2\mu+\lam}+\f{G^2}{2\mu+\lam}+\f{P|G|}{2\mu+\lam}\Bigr)\,dx+C\int P|\dive u|\,dx\int \f{|G|}{2\mu+\lam}\,dx\\
&\leq C\|\na u\|_{L^2}\|\f{G^2}{2\mu+\lam}\|_{L^2}+C\|\na u\|_{L^2}\|P\|_{L^{2+\delta}}\|G\|_{L^{\f{2(2+\delta)}{\delta}}}+C\|\na u\|_{L^2}\|P\|_{L^2}\|G\|_{L^{\f{2(2+\delta)}{\delta}}}\\
&\leq  C\|\na u\|_{L^2}\|\f{G^2}{2\mu+\lam}\|_{L^2}+C\|\na u\|_{L^2}\|G\|_{L^{\f{2(2+\delta)}{\delta}}}.
\end{split}
\eeq

Then, noticing that \eqref{defiAB} gives
\beq\label{gest1}
\|G\|_{L^2}\leq CR_T^{\f{\b}2}A,
\eeq
which together with the H\"older inequality, \eqref{psineq} and \eqref{ellipticest2} yield that for $0<\delta<1,$
\beq\label{gest2}
\begin{split}
\|\f{G^2}{2\mu+\lam}\|_{L^2}&\leq C\| \Bigl(\f{G}{\sqrt{2\mu+\lam}}\Bigr)^{1-\delta}  G^{1+\delta} \|_{L^2}\leq C\|\f{G}{\sqrt{2\mu+\lam}}\|_{L^2}^{1-\delta}\|G\|^{1+\delta}_{L^{\f{2(1+\delta)}{\delta}}}\\
&\leq CA^{1-\delta}\|G\|_{L^2}^{\delta}\|\na G\|_{L^2}\leq C R_T^{\f{1+\delta\b}2}AB.
\end{split}
\eeq
Similarly, we have
\beq\label{gest3}
\|G\|_{L^{\f{2(2+\delta)}{\delta}}}\leq C\|G\|_{L^2}^{\f{\delta}{2+\delta}}\|\na G\|_{L^2}^{\f{2}{2+\delta}}\leq CR_T^{\f{1+\delta\b}2}A^{\f{\delta}{2+\delta}}B^{\f{2}{2+\delta}}.
\eeq
Putting \eqref{gest2}, and \eqref{gest3} into \eqref{Iest} yields
\beq\label{Iest2}
\begin{split}
\sum_{i=3}^7|I_i|&\leq CR_T^{\f{1+\delta\b}2}\|\na u\|_{L^2} (AB+A^{\f{\delta}{2+\delta}}B^{\f{2}{2+\delta}})\\
&\leq CR_T^{\f{1+\delta\b}2}\|\na u\|_{L^2} (AB+A+B)\\
&\leq \ep B^2 +C(\ep)R_T^{1+\delta \b}(1+\|\na u\|_{L^2}^2)(1+A^2).
\end{split}
\eeq

Finally, substituting \eqref{I1est}, \eqref{I2est}, and \eqref{Iest2} into \eqref{ueq4}, choosing $\ep$ sufficienlty small and $\delta\in (0,1),$ we obtain
\ben\label{ABest}
\f{d}{dt}A^2+ B^2\leq CR_T^{1+\delta\b}(1+\|\na u\|_{L^2}^2)(1+A^2).
\een
Dividing this inequality by $e+A^2,$ and using \eqref{energyineq}, we reach \eqref{energyest1} and finish the proof of Lemma \ref{energylem2}.
\end{proof}

The following $L^p$ estimates of the momentum will play an important role in the estimate of the upper bound of the density.
\begin{lem}\label{momentlem}
{\sl For any $p>2,$ there exists a positive constant $C$ depending only on $p,\mu,\b,\g, T,$ and $E_0$ such that
\beq\label{momentest}
\|\r u\|_{L^p}\leq CR_T(e+\|\na u\|_{L^2}).
\eeq
Moreover, let $\al=\f{\mu^{\f12}}{2(\mu+1)}R_T^{-\f{\b}2}\in (0,\f14],$ then for any $q>3$ and $\ep>0$ there exists a positive constant $C_1$ depending only on $q,\mu,\b,\g, T,$ and $E_0$ such that
\beq\label{momentest1}
\|\r u\|_{L^q}\leq C_1 R_T^{1-\f1q+\ep}(e+\|\na u\|_{L^2})^{1-\f{2+\al}{q}+\ep}\log^{\f{2+\al+q}{2q}}\Bigl(e+\bigl( \f{B^2}{e+A^2}\bigr)^{\f14}\Bigr),
\eeq
where $A,B$ are defined as \eqref{defiAB}.}
\end{lem}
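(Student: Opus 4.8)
The two estimates are of very different depth, so I would prove them separately.

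\smallskip\noindent\emph{Proof of \eqref{momentest}.} Since $\|\rho\|_{L^\infty}\le R_T$, it suffices to bound $\|u\|_{L^p}$. Splitting $u=(u-\bar u)+\bar u$, the Poincar\'e--Sobolev inequality \eqref{psineq} and Poincar\'e's inequality $\|u-\bar u\|_{L^2}\le C\|\na u\|_{L^2}$ give $\|u-\bar u\|_{L^p}\le Cp^{1/2}\|\na u\|_{L^2}$, while Lemma~\ref{interpolationlem2} together with \eqref{energyineq} gives $\|u\|_{L^2}^2\le C\int\rho|u|^2\,dx+C\|\na u\|_{L^2}^2\le C(1+\|\na u\|_{L^2}^2)$, hence $|\bar u|\le C(e+\|\na u\|_{L^2})$. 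Multiplying by $R_T$ yields \eqref{momentest}.

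\smallskip\noindent\emph{Proof of \eqref{momentest1}.} I would run the following chain. \textit{(i) A weighted $L^q$ reduction.} Writing $\rho^q|u|^q=\rho^{q/2}(\rho^{1/2}|u|)^q$ and putting the pure power $\rho^{q/2}$ into a large but finite Lebesgue space, Lemma~\ref{vklem} gives $\|\rho u\|_{L^q}\le C(q)\|\rho^{1/2}u\|_{L^{qs}}$ for $s>1$ close to $1$ with \emph{no} power of $R_T$; interpolating $\|\rho^{1/2}u\|_{L^{qs}}$ between $L^2$ (bounded by \eqref{energyineq}) and $L^\infty$, and using $\|\rho^{1/2}u\|_{L^\infty}\le R_T^{1/2}\|u\|_{L^\infty}$, one gets $\|\rho u\|_{L^q}\le C(q)R_T^{\frac12(1-2/(qs))}\|u\|_{L^\infty}^{\,1-2/(qs)}$. \textit{(ii) An $L^\infty$ bound for $u$.} Apply Brezis--Wainger \eqref{bwineq} to $u$: $\|u\|_{L^\infty}\le C\|u\|_{H^1}\log^{1/2}(e+\|\na u\|_{L^{q_0}})+C$ for a fixed $q_0>2$, with $\|u\|_{H^1}\le C(e+\|\na u\|_{L^2})$ by Lemma~\ref{interpolationlem2}. \textit{(iii) A $W^{1,q_0}$ bound for $u$ via $G$ and $\om$.} Since $\dive u=(G+P-\bar P)/(2\mu+\lambda)$ and $\na u$ is controlled, up to its mean, by $\dive u$ and $\om$, Calder\'on--Zygmund gives $\|\na u\|_{L^{q_0}}\le C(q_0)(\|G\|_{L^{q_0}}+\|P\|_{L^{q_0}}+\|\om\|_{L^{q_0}}+1)$; here $\|P\|_{L^{q_0}}=\|\rho\|_{L^{\g q_0}}^{\g}$ is $R_T$-free by Lemma~\ref{vklem}, and $\|G\|_{L^{q_0}},\|\om\|_{L^{q_0}}$ are estimated by \eqref{psineq} using $\|G\|_{L^2}\le CR_T^{\b/2}A$ (cf.\ \eqref{gest1}), $\|\om\|_{L^2}\le C\|\na u\|_{L^2}$, and $\|\na G\|_{L^2}+\|\na\om\|_{L^2}\le CR_T^{1/2}B$ (cf.\ \eqref{ellipticest2}).

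\smallskip To produce the stated logarithm I would, in step (iii), write $B=\sqrt{e+A^2}\cdot\big(B/\sqrt{e+A^2}\big)$, use $X^{2/q_0}(e+A^2)^{(1-2/q_0)/2}\le(e+A^2)^{1/2}$ (with $X=A$ or $X=\|\na u\|_{L^2}$), and then bound $(e+A^2)^{1/2}\le CR_T^{\b/2}(e+\|\na u\|_{L^2})$ via \eqref{Aest}; taking logarithms (using $\log(e+\prod a_i)\le C+\sum\log(e+a_i)$) this gives $\log^{1/2}(e+\|\na u\|_{L^{q_0}})\le C_\delta R_T^{\delta}+C_\delta(e+\|\na u\|_{L^2})^{\delta}+c\log^{1/2}\!\big(e+\tfrac{B^2}{e+A^2}\big)$ for every $\delta>0$, with $c=\big(\tfrac{1-2/q_0}{2}\big)^{1/2}<1$; hence $\|u\|_{L^\infty}\le C_\delta R_T^{\delta}(e+\|\na u\|_{L^2})^{1+\delta}\log^{1/2}\!\big(e+\tfrac{B^2}{e+A^2}\big)$, and feeding this into (i), together with $\log^{1/2}(e+x)\le C\log^{1/2}(e+x^{1/4})$, produces a bound of the required shape carrying $\log^{\frac{2+\al+q}{2q}}\!\big(e+(\tfrac{B^2}{e+A^2})^{1/4}\big)$. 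The genuine obstacle is then the \emph{exponent bookkeeping}: the crude argument above only produces the power $(1+\delta)(1-2/q)$ of $e+\|\na u\|_{L^2}$, while one needs the sharper $1-(2+\al)/q+\ep$; closing this gap forces a careful optimisation of the weight split in (i), of $q_0$, and of all the interpolation weights, balancing the extra room in the $\rho$-weighting against the losses $R_T^{\b/2}$ coming from \eqref{gest1} and \eqref{Aest} and the cushion $1/(2\mu+\lambda)\le 1/(2\mu)$ — and it is precisely this worst-case balance that dictates $\al=\tfrac{\mu^{1/2}}{2(\mu+1)}R_T^{-\b/2}$, the denominator $\mu+1\ge 2\mu^{1/2}$ being what keeps $\al\le\tfrac14$. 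I expect this last optimisation to be the main technical difficulty.
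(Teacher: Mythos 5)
Your proof of \eqref{momentest} is fine and is exactly what the paper intends (Poincar\'e--Sobolev plus Lemma \ref{interpolationlem2} and \eqref{energyineq}). For \eqref{momentest1}, however, there is a genuine gap, and you have in fact located it yourself without seeing how to close it: your step (i) can never produce the exponent $1-\f{2+\al}{q}$ on $\|u\|_{L^\infty}$. Interpolating $\rho^{1/2}u$ between $L^2$ and $L^\infty$ gives the exponent $1-\f{2}{qs}$ with $s>1$, which is \emph{strictly larger} than $1-\f2q$, i.e.\ on the wrong side of the target; and no re-splitting of the $\rho$-weights, nor any re-tuning of $q_0$ and the interpolation parameters, can help, because the lower endpoint of the interpolation in the $|u|$-variable is pinned at $2$ by the basic energy inequality \eqref{energyineq}. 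The extra $\al$ is not an optimisation artifact to be recovered by bookkeeping --- it is the entire content of the lemma, and it is precisely what later makes $2s<1+\theta$ (hence the time-integrability of $\|\na u\|_{L^q}\|\rho u\|_{L^p}$) hold in the proof of Proposition \ref{densityprop}.

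The missing idea is a \emph{new a priori estimate}, not an interpolation: multiply the momentum equation $\eqref{CNS}_2$ by $(2+\al)|u|^{\al}u$ and integrate over $\T^2$. The dangerous cross term $(2+\al)\al\int(\mu+\lam)|\dive u|\,|u|^{\al}|\na u|\,dx$ is absorbed by the dissipation $\f{2+\al}{2}\int(\mu+\lam)(\dive u)^2|u|^{\al}\,dx+\f{(2+\al)\mu}{8(\mu+1)}\int|u|^{\al}|\na u|^2\,dx$ exactly because $\al^2(\mu+\lam)\le \al^2(\mu+1)R_T^{\b}=\f{\mu}{4(\mu+1)}$; this absorption --- and not the balance against $1/(2\mu+\lam)$ you describe at the end --- is what dictates the choice $\al=\f{\mu^{1/2}}{2(\mu+1)}R_T^{-\b/2}$. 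The pressure term is handled by Young's inequality and \eqref{vkineq}, and Gronwall then yields $\sup_{0\le t\le T}\int\rho|u|^{2+\al}\,dx\le C$ with $C$ independent of $R_T$ (this is \eqref{energyest2}). With this uniform $L^{2+\al}$ bound in hand one interpolates $\|\rho u\|_{L^q}\le C\|\rho u\|_{L^{2+\al}}^{1-s}\|\rho u\|_{L^\infty}^{s}$ with $s=1-\f{2+\al}{q}$, obtaining $\|\rho u\|_{L^q}\le CR_T^{1-\f1q}\|u\|_{L^\infty}^{s}$; your steps (ii)--(iii) (Brezis--Wainger for $\|u\|_{L^\infty}$ together with the $L^4$ bound \eqref{uest} on $\na u$) then finish the proof essentially as you describe.
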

\begin{proof}
\eqref{momentest} is the directly consequence from \eqref{psineq} and \eqref{feiineq}. We just focus on the proof of \eqref{momentest1}.

Multiplying $\eqref{CNS}_2$ by $(2+\al)|u|^{\al}u,$ we get after integrating the resulting equation over $\T^2$ that
\beo
\begin{split}
\f{d}{dt}&\int \r |u|^{2+\al}\,dx+(2+\al)\int |u|^{\al}\bigl(\mu |\na u|^2+(\mu +\lam)(\dive u)^2\bigr)\,dx\\
\leq& (2+\al)\al\int (\mu+\lam)|\dive u||u|^{\al}|\na u|\,dx+ C\int \r^{\g}|u|^{\al}|\na u|\,dx\\
\leq& \f{2+\al}2 \int  (\mu+\lam)(\dive u)^2|u|^{\al}\,dx+(\f{(2+\al)\mu}{8(\mu+1)}+\mu) \int |u|^{\al}|\na u|^2\,dx\\
&+C\int  \r |u|^{2+\al}\,dx+C\int \r^{(2+\al)\g-\f{\al}2}\,dx,
\end{split}
\eeo
which together with Gronwall's inequality and \eqref{vkineq} thus gives
\beq\label{energyest2}
\sup_{0\leq t\leq T}\int \r |u|^{2+\al}\,dx\leq C.
\eeq
Then let $s=1-\f{2+\al}{q},$ it follows form H\"older's inequality that
\beq\label{momentest2}
\begin{split}
\|\r u\|_{L^q}&\leq C\|\r u\|_{L^{2+\al}}^{1-s}\|\r u\|_{L^\infty}^{s}\\
&\leq C R_T^{\f{1+\al}{2+\al}(1-s)} R_T^{s} \|u\|_{L^\infty}^s =CR_T^{1-\f1q}\|u\|_{L^\infty}^s.
\end{split}\eeq

Using \eqref{bwineq}, \eqref{feiineq}, and \eqref{energyineq}, we have
\beq\label{momentest3}
\|u\|_{L^\infty} \leq C\|u\|_{H^1}\log^{\f12}(e+\|\na u\|_{L^4}) \leq C(1+\|\na u\|_{L^2})\log^{\f12}(e+\|\na u\|_{L^4}).
\eeq
But from \eqref{omegaest}, \eqref{Aest}, \eqref{gest2}, and \eqref{vkineq}, we obtain that
\beq\label{uest}
\begin{split}
\|\na u\|_{L^4}&\leq C(\|\dive u\|_{L^4}+\|\omega\|_{L^4}) \leq C\|\f{G+P-\bar{P}}{2\mu+\lam}\|_{L^4}+CR_T^{\f14}A^{\f12}B^{\f12}\\
&\leq C\|\f{G^2}{2\mu+\lam}\|_{L^2} +C + CR_T^{\f14}A^{\f12}B^{\f12} \leq CR_T^{\f{1+\delta\b}4}A^{\f12}B^{\f12}\\
&\leq CR_T^{\f{1+\delta\b}4} (e+A^2)^{\f12} \Bigl( \f{B^2}{e+A^2}\Bigr)^{\f14}\leq CR_T^{\f{1+2\b+\delta\b}4} (e+\|\na u\|_{L^2}) \Bigl( \f{B^2}{e+A^2}\Bigr)^{\f14}.
\end{split}\eeq
Substituting \eqref{momentest3} and \eqref{uest} into \eqref{momentest2}, we obtain \eqref{momentest1}, which completes the proof of Lemma \ref{momentlem}.
\end{proof}

Now we are in the position to prove the main result of this section.
\begin{prop}\label{densityprop}
{\sl Under the conditions of Theorem \ref{mainthm1}, there is a constant $C$ depending only on $\mu, \b, \g, T,$ and $E_0$ such that
\beq\label{densityest}
\sup_{0\leq t\leq T}(\|\r(t)\|_{L^\infty}+\|\na u(t)\|_{L^2})+\int_0^T\int \r |\dot{u}|^2\,dx\,dt\leq C.
\eeq}
\end{prop}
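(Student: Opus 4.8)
The plan is to close a nonlinear Gronwall-type estimate for the quantity $\Phi(t)\eqdefa e+\|\rho(\cdot,t)\|_{L^\infty}$, exploiting the transport structure of the modified equation \eqref{rhoeq2} along particle paths. First I would integrate \eqref{rhoeq2} along the flow of $u$: for a fixed $x$, writing $\rho(t)=\rho(X(t,x),t)$, one has
\[
\varphi(\rho(t))-\varphi(\rho_0)+\int_0^t P(\rho)\,ds=\psi(t)-\psi_0+\int_0^t\bigl(\bar P-\bar G+F\bigr)\,ds,
\]
where $\varphi(\rho)=2\mu\log\rho+\b^{-1}\rho^{\b}$ grows like $\rho^{\b}$. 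The strategy, following Vaigant--Kazhikhov and Huang--Li, is that the dominant term $\b^{-1}\rho^{\b}$ on the left is controlled provided the right-hand side grows slower than $\rho^{\b}$; the key gain over $\b>\f43$ comes from using the sharp $L^q$ bound \eqref{momentest1} on $\rho u$ with the small exponent $\al=\f{\mu^{1/2}}{2(\mu+1)}R_T^{-\b/2}$, which is precisely tuned so that the $(e+\|\na u\|_{L^2})^{1-(2+\al)/q+\ep}$ factor can be absorbed.

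The core of the argument is bounding $\psi=(-\tri)^{-1}\dive(\rho u)$ and $F$ in $L^\infty_x$ (or after time integration). For $\psi$: by elliptic regularity $\|\psi\|_{L^\infty}\le C\|\rho u\|_{L^q}$ for any $q>2$, so \eqref{momentest} alone gives $\|\psi\|_{L^\infty}\le CR_T(e+\|\na u\|_{L^2})$ — but that is too crude; here one must instead use \eqref{momentest1} to get a bound like $CR_T^{1-\f1q+\ep}(e+\|\na u\|_{L^2})^{1-\f{2+\al}{q}+\ep}\log^{\f{2+\al+q}{2q}}(e+(B^2/(e+A^2))^{1/4})$, then optimize $q$ (taking $q\to\infty$ in the exponent of $R_T$ so that $\psi$ contributes essentially $R_T^{1+\ep}$). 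For $F=\sum[u_i,R_iR_j](\rho u_j)$, I would apply the commutator estimate \eqref{commuteineq2}: $\|\na F\|_{L^{r}}\le C\|\na u\|_{L^{q_2}}\|\rho u\|_{L^{q_3}}$, then Brezis--Wainger \eqref{bwineq} or a direct Sobolev embedding to pass from $\|\na F\|_{L^r}$ (with $r>2$) to $\|F\|_{L^\infty}$; the $\|\na u\|_{L^{q_2}}$ factor is controlled via \eqref{uest} in terms of $R_T^{c}(e+\|\na u\|_{L^2})(B^2/(e+A^2))^{1/4}$. Time-integrating and using $\int_0^T\f{B^2}{e+A^2}\,dt\le CR_T^{1+\delta\b}$ from \eqref{energyest1} together with the logarithmic Jensen trick (the $\log^{1/2}$ or $\log$ of the average is controlled by the average of the log, which is $\le CR_T^{1+\delta\b}$) keeps every contribution at the level $R_T^{1+\delta\b+\ep}$ with a power strictly less than $\b$ once $\delta$ is chosen small.

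So the skeleton is: \textbf{(i)} integrate \eqref{rhoeq2} along trajectories to get the pointwise identity for $\varphi(\rho)$; \textbf{(ii)} bound $\sup_x\int_0^t(|\psi|+|F|+|\bar P|+|\bar G|)$ by $C R_T^{\theta}$ with $\theta=1+\delta\b+\ep<\b$, using Lemmas \ref{momentlem}, \ref{commutelem}, \ref{interpolationlem1}, the elliptic estimates \eqref{ellipticest1}, and crucially \eqref{energyest1} to absorb the time integral of $B^2/(e+A^2)$; \textbf{(iii)} deduce $\b^{-1}\rho^{\b}(t)\le C+C R_T^{\theta}$ pointwise, whence $R_T\le C+CR_T^{\theta/\b}$, and since $\theta/\b<1$ this forces $R_T\le C$; \textbf{(iv)} feed the bound $\|\rho\|_{L^\infty}\le C$ back into \eqref{ABest}, which now reads $\f{d}{dt}A^2+B^2\le C(1+\|\na u\|_{L^2}^2)(1+A^2)$, and Gronwall with \eqref{energyineq} and \eqref{Aest} yields $\sup_t\|\na u\|_{L^2}^2+\int_0^T B^2\,dt\le C$. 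The main obstacle is step (ii): getting the exponent of $R_T$ strictly below $\b$ requires the delicate interplay between the small parameter $\al\sim R_T^{-\b/2}$ in \eqref{momentest1}, the choice of $q\to\infty$, the choice of $\delta\in(0,1)$ small, and the logarithmic weights — this is exactly where the improvement from $\b>\f43$ to $\b>1$ is won or lost, and one must be careful that no term (particularly the pressure terms $\|P\|_{L^{2+\delta}}$ and the commutator $F$) reintroduces a power of $R_T$ that is $\ge\b$.
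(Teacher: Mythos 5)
Your overall architecture is exactly the paper's: integrate \eqref{rhoeq2} along particle paths, bound the right-hand side ($\psi$, $F$, $\bar P$, $\bar G$) by $CR_T^{\theta}$ with $\theta=1+\tfrac{\delta\b}{2}+\ep<\b$, deduce $R_T^{\b}\le C+CR_T^{\theta}$ hence $R_T\le C$, and then feed this back into \eqref{ABest} and Gronwall to close \eqref{densityest}. Your treatment of $F$ (commutator estimate \eqref{commuteineq2}, interpolation of $\|\na u\|_{L^q}$ via \eqref{uest}, the momentum bound \eqref{momentest1}, time integration and Young's inequality against $\int_0^T\|u\|_{H^1}^2\,dt$ and $\int_0^T B^2/(e+A^2)\,dt$) is also the paper's argument.

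The genuine gap is your estimate of $\psi$. In the trajectory identity, $\psi$ appears evaluated at fixed times, so you need $\sup_{0\le t\le T}\|\psi(t)\|_{L^\infty}\le CR_T^{\theta}$ pointwise in time; time integration and Jensen are not available here. Your proposed route, $\|\psi\|_{L^\infty}\le C\|\rho u\|_{L^q}$ combined with \eqref{momentest1}, leaves a factor $(e+\|\na u\|_{L^2})^{1-\f{2+\al}{q}+\ep}$ with a strictly positive exponent (at least about $\f14$, since \eqref{momentest1} requires $q>3$ and $\al\le\f14$; sending $q\to\infty$ makes this exponent \emph{worse}, approaching $1$), together with a factor $\log^{\f{2+\al+q}{2q}}\bigl(e+(B^2/(e+A^2))^{1/4}\bigr)$ evaluated at time $t$. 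At this stage of the argument the only available controls are $\log(e+A^2)\le CR_T^{1+\delta\b}$, i.e.\ $\|\na u\|_{L^2}\le C\exp(CR_T^{1+\delta\b})$, and $\int_0^T B^2/(e+A^2)\,dt\le CR_T^{1+\delta\b}$; a positive power of $\|\na u\|_{L^2}$ is therefore exponentially large in $R_T$ and cannot be dominated by $R_T^{\b}$, and $B^2(t)/(e+A^2(t))$ has no pointwise-in-time bound at all. The paper avoids this by applying Brezis--Wainger \eqref{bwineq} to $\psi$ itself: $\|\psi\|_{L^\infty}\le C\|\psi\|_{H^1}\log^{\f12}(e+\|\na\psi\|_{L^3})+C$, where the prefactor $\|\psi\|_{H^1}\le C(\|\rho u\|_{L^{2\al/(\al+1)}}+\|\rho u\|_{L^2})\le CR_T^{\f12}$ is a pure energy bound carrying \emph{no} factor of $\|\na u\|_{L^2}$, and the dangerous quantity $\|\rho u\|_{L^3}\le CR_T(e+\|\na u\|_{L^2})$ sits inside the logarithm, where \eqref{energyest1} converts it into $CR_T^{(1+\delta\b)/2+\ep}$. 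This yields $\|\psi\|_{L^\infty}\le CR_T^{1+\f{\delta\b}{2}+\ep}$, which is what the bootstrap needs. You should replace your $\psi$ step by this logarithmic argument; the rest of your plan then goes through as in the paper.
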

\begin{proof}
First, it follows from \eqref{energyineq} and \eqref{vkineq} that
\beq\label{energyest3}
\|\r u\|_{L^{\f{2\al}{\al+1}}}\leq C\|\r\|_{L^{\al}}^{\f12}\|\r^{\f12}u\|_{L^2}\leq C,
\eeq
which together with \eqref{bwineq}, \eqref{energyest1}, \eqref{Aest}, and \eqref{momentest} yields that for any $\ep>0,$
\beq\label{psiest}
\begin{split}
\|\psi\|_{L^\infty}&\leq C\|\psi\|_{H^1}\log^{\f12}(e+\|\na \psi\|_{L^3}) +C\\
&\leq C(\|\r u\|_{L^{\f{2\al}{\al+1}}}+\|\r u\|_{L^2})\log^{\f12}(e+\|\r u\|_{L^3})+C\\
&\leq CR_T^{\f12}\log^{\f12}\bigl(R_T(e+\|\na u\|_{L^2})\bigr)+C\\
&\leq CR_T^{\f12+\ep}\log^{\f12}(e+A^2)+C\\
&\leq CR_T^{1+\f{\delta\b}2+\ep}.
\end{split}
\eeq
Next, on one hand, we deal with the $\|F\|_{L^\infty}$. Taking $\f{1}{p}+\f{1}{q}=\f12$ and Applying the \eqref{bwineq}, we have that
\beq\label{F}
\begin{split}
\|F\|_{L^\infty}&\leq C\|F\|_{H^1}\log^{\f12}(e+\|\na F\|_{L^3}) +C\\
&\leq C(\|\na u\|_{L^2}\|\rho u\|_{L^2}+\|\na u\|_{L^q}\|\rho u\|_{L^p})\log^{\f12}(e+\|\na u\|_{L^4}\|\rho u\|_{L^{12}})+C\\
&\leq C(R_T^{\f12}\|\na u\|_{L^2} \|\rho^{\f12} u\|_{L^2}  +\|\na u\|_{L^q}\|\rho u\|_{L^p})\log^{\f12}(e+\|\na u\|_{L^4}\|\rho u\|_{L^{12}})+C.
\end{split}
\eeq
By the H\"older inequality and \eqref{uest}, we have that
\beno
\|\na u\|_{L^q}
&\leq& \|\nabla u\|^\theta_{L^2}\|\nabla u\|^{1-\theta}_{L^4}\leq C \|\nabla u\|^\theta_{L^2}(R_T^{\f{1+\delta\b}4+\f \beta 2} \|\nabla u\|_{L^2} (\f{B}{e+A})^{\f12}+1)^{1-\theta}\\
&\leq& C(1+ \|\nabla u\|_{L^2})(R_T^{\f{1+\delta\b}4+\f \beta 2}  (\f{B}{e+A})^{\f12}+1)^{1-\theta},
\eeno
where $\f{1}{q }=\f{\theta}{2}+\f{1-\theta}{4}$.

Next, by \eqref{momentest1}, we have that  for $s=1-\f{2+\al}{p},$
\beno
\|\rho u\|_{L^p}&\leq&  C R_T^{1-\f1p+\e}\|u\|^{s+\e}_{H^1}\left(\log(1+    (\f{B}{e+A})^{\f12} )\right)^{s}.
\eeno

Combining above two estimates, we have that
\beno
\|\na u\|_{L^q}\|\rho u\|_{L^p}
&\leq&  CR_T^{1-\f1p+(1-\theta)(\f{1+\delta\b}4+\f \beta 2)+\e} (1+ \|u\|_{H^1})^{1+s+\e}( 1+ (\f{B}{e+A}) )^{\f{1-\theta+\e}{2}}  .
\eeno

By the definition of $s, \theta$, we have that
\beno
s=1-(2+\al)\f{1-\theta}{4},
\eeno
which implies that when $0<\theta<1$
\beno
2s< 1+\theta.
\eeno
Taking $\e$ small enough such that
\beno
2s+3\e\leq 1+\theta.
\eeno
Thus, we have
\beno
\int^T_0\|F\|_{L^\infty}\,dt \leq C\left(R_T^{1-\f1p+(1-\theta)(\f{1+\delta\b}4+\f \beta 2)+\e}  \right)^{\f {4}{\theta+3-\e}}\int^T_0 \|u\|_{H^1}^{2}\,dt+C\int_0^T(1+ \f{B}{e+A})^2\,dt.
\eeno
If we take $\theta$ close to 1, we have that
\beno
\int^T_0\|F\|_{L^\infty}\,dt +\|\psi\|_{L^\infty} \leq  C(R_T^{1+\f{\delta\b}2+\ep}+1).
\eeno

Now, we are in the position to prove the upper bound of the density. Recalling the \eqref{rhoeq2} and integrating to get that
\beno
R_T^{\beta}
& \leq & C(R_T^{1+\f{\delta\b}2+\ep}+1)+C\int_0^T(1+|\overline{G}|)\,dt\\
&\leq &C(R_T^{1+\f{\delta\b}2+\ep}+1)+\int_0^T(1+\|\rho^\beta\|_{L^2}\|\dv u\|_{L^2})\,dt\\
&\leq &C(R_T^{1+\f{\delta\b}2+\ep}+1).
\eeno
where the constant depends on the initial data and the time $T$. Thus, if we take $\beta>1,$ $\delta$ and $\ep$ sufficiently small such that $\b>1+\f{\delta\b}2+\ep,$ we can get that
\beno
R_T\leq C.
\eeno

By the inequality \eqref{ABest}, we have that
\beno
\sup_{0\leq t\leq T} \big(e+A^2(t)\big)+\int_0^T {B^2(t)} \,dt\leq C .
\eeno

By the classic elliptic estimate, we get the desired result.
\end{proof}

\setcounter{equation}{0}
\section{A Blow-up Criterion}

In this section, we will establish a blow-up criterion for the \eqref{CNS}--\eqref{viscosity} for all $\beta>1.$
\begin{prop}\label{Blow-upC}
{\sl Assume that $(\rho,u)$ is the strong solution of \eqref{CNS}--\eqref{viscosity}. Let $T^*$ be a maximal existence time of the solution.
If $T^*<\infty$, then we have
\begin{eqnarray}\label{blowup}
\limsup\limits_{T\uparrow
T^*}\|\rho(x,t)\|_{L^{\infty}(0,T;L^\infty(\Omega))}=\infty.
\end{eqnarray}}
\end{prop}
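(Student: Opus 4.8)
The plan is to prove the contrapositive: assuming a uniform upper bound $\sup_{0\le t<T^*}\|\rho(\cdot,t)\|_{L^\infty}<\infty$, I will show that the strong solution extends past $T^*$, contradicting maximality. The overall strategy is to propagate all the norms appearing in the local existence statement (Lemma \ref{locallem}) up to $t=T^*$ using \emph{a priori} estimates that depend only on $\mu,\b,\g,T^*,E_0$ and the assumed density bound $R_{T^*}$. The key structural input is that once $R_{T}$ is under control, Proposition \ref{densityprop} already gives
\beo
\sup_{0\le t<T^*}\bigl(\|\rho(t)\|_{L^\infty}+\|\na u(t)\|_{L^2}\bigr)+\int_0^{T^*}\!\!\int\rho|\dot u|^2\,dx\,dt\le C,
\eeo
and Lemma \ref{energylem2} (inequality \eqref{ABest}) then yields $\sup_{0\le t<T^*}(e+A^2(t))+\int_0^{T^*}B^2(t)\,dt\le C$, i.e.\ control of $\|\omega\|_{L^2}$, $\|G\|_{L^2}$ and $\int_0^{T^*}\|\rho^{1/2}\dot u\|_{L^2}^2$. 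So the contradiction argument reduces to bootstrapping from these quantities to the higher-order norms $\|\rho\|_{W^{1,q}}$, $\|\rho_t\|_{L^2}$, $\|u\|_{H^2}$, $\|u\|_{L^2_tH^3}$.

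The key steps, in order: (1) From \eqref{ueq} and the elliptic estimate \eqref{ellipticest1} bound $\|\na u\|_{L^p}\lesssim \|G\|_{L^p}+\|\omega\|_{L^p}+\text{l.o.t.}$; combined with $\|\rho\dot u\|_{L^2}\in L^2_t$ and interpolation this gives $\|\na u\|_{L^\infty_tL^2}$ and $\|\na u\|_{L^2_tL^p}$ for the relevant $p$. (2) Estimate $\|\na\rho\|_{L^q}$: differentiate the mass equation $\eqref{CNS}_1$, multiply by $|\na\rho|^{q-2}\na\rho$, and close a Grönwall inequality whose forcing involves $\|\na u\|_{L^\infty}$; use the Beale--Kato--Majda inequality \eqref{bkmineq} together with \eqref{bwineq} to control $\|\na u\|_{L^\infty}$ by $\log(e+\|\na^2 u\|_{L^p})$ times $(\|\dive u\|_{L^\infty}+\|\omega\|_{L^\infty})$, and control $\|\na^2 u\|_{L^p}$ back in terms of $\|\na\rho\|_{L^q}$ and lower-order quantities via elliptic regularity applied to the momentum equation — this is the usual log-Grönwall loop. (3) Bound $\|\rho^{1/2}u_t\|_{L^\infty_tL^2}$ and $\int_0^{T^*}\|\na u_t\|_{L^2}^2$ by differentiating $\eqref{CNS}_2$ in time, testing against $u_t$, and using the already-established bounds (this is the step that consumes $\int\rho|\dot u|^2$). (4) Feed everything into elliptic estimates for the stationary Lamé-type system to get $\|u\|_{L^\infty_tH^2}$ and $\|u\|_{L^2_tH^3}$, and recover $\|\rho_t\|_{L^\infty_tL^2}=\|\dive(\rho u)\|_{L^\infty_tL^2}$. (5) Conclude that $(\rho(\cdot,t),u(\cdot,t))$ stays in the space of Lemma \ref{locallem} with norms bounded uniformly as $t\uparrow T^*$, so the local theory restarts from $t=T^*-\eta$ and extends the solution beyond $T^*$; contradiction.

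The main obstacle I expect is step (2)–(3): closing the logarithmic Grönwall loop for $\|\na\rho\|_{L^q}$ without circularity. One must carefully order the estimates so that $\|\na^2u\|_{L^p}$ is bounded by $C(1+\|\na\rho\|_{L^q})$ times quantities that are \emph{already} known to be $L^1_t$ or $L^2_t$, feed this into \eqref{bkmineq}, and then absorb the resulting $\|\na\rho\|_{L^q}\log(e+\|\na\rho\|_{L^q})$ term via the Grönwall--Osgood lemma; the subtlety is that the coefficient multiplying the logarithm, namely $\|\dive u\|_{L^\infty}+\|\omega\|_{L^\infty}$, is only in $L^1_t$ if one has first upgraded the $L^4$ estimate \eqref{uest} to an $L^\infty$-type estimate, which in turn needs the time-derivative bound from step (3). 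Organizing this interdependence correctly — essentially running steps (2) and (3) together as a single coupled Grönwall argument — is where the real work lies; the rest is standard energy bookkeeping given $R_{T^*}<\infty$.
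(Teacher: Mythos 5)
Your proposal is correct and follows essentially the same route as the paper: propagate the assumed density bound through Proposition \ref{densityprop} and \eqref{ABest}, then a material/time-derivative energy estimate, then a logarithmic Gr\"onwall bound on $\|\na\rho\|_{L^q}$ via the Beale--Kato--Majda inequality, then elliptic regularity, and finally restart the local theory from Lemma \ref{locallem}. The circularity you worry about in steps (2)--(3) does not actually arise in the paper's ordering: the $\dot u$-estimate (Proposition \ref{highest1}) is closed first using only $\|\na u\|_{L^4}^4\leq CA^2B^2\in L^1_t$ from \eqref{uest}, so that $\|\rho\dot u\|_{L^q}\in L^2_t$ is already available as the integrable coefficient of the logarithm when the $\|\na\rho\|_{L^q}$ Gr\"onwall loop is run in Proposition \ref{highest2}.
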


Fist, from the proof of the Lemma \ref{energylem1} and Lemma \ref{energylem2}, we obtain
\begin{prop}
{\sl There exists a constant $C$ depending only on $\mu, \b, \g, T,$ and $E_0, \|\rho\|_{L^\infty}$ such that
\beno
&&\sup_{0\leq t\leq T}\int (\rho |u|^2+\rho^\g)(t)\,dx +\int_0^T\int \bigl(\mu|\na u|^2+\lam(\rho)(\dive u)^2\bigr)\,dx\,dt\leq C,\\
&&\sup_{0\leq t\leq T}\big(e+A^2(t)\big)+\int_0^TB^2(t)\,dt\leq C.
\eeno}
\end{prop}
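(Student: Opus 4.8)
The plan is to observe that both estimates are immediate by-products of the analysis already carried out in Section~3, once the upper bound of the density is taken as a \emph{hypothesis} rather than as something to be proved. The first line is nothing but the standard energy inequality of Lemma~\ref{energylem1}: its proof uses no bound on $\|\rho\|_{L^\infty}$ whatsoever, and its constant depends only on $\g,T,\|\rho_0\|_{L^\g}$ and $\|\rho_0^{\f12}u_0\|_{L^2}$, all of which are controlled by $E_0$. So for the first estimate there is nothing to do beyond invoking Lemma~\ref{energylem1}.

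For the second estimate, the key remark is that the differential inequality \eqref{ABest}, namely $\f{d}{dt}A^2+B^2\leq CR_T^{1+\delta\b}(1+\|\na u\|_{L^2}^2)(1+A^2)$, was established inside the proof of Lemma~\ref{energylem2} using only the structure of \eqref{CNS}, the elliptic estimates \eqref{ellipticest1}--\eqref{ellipticest2}, the $\cH^1$--$\cB\cM\cO$ duality together with the commutator bounds, and the Vaigant--Kazhikhov $L^p$ estimate \eqref{vkineq}; at no stage was an $L^\infty$ bound on $\rho$ invoked, since every density-dependent factor was honestly absorbed into the quantity $R_T$ of \eqref{defiRT}. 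Under the hypothesis of the present proposition, $R_T=1+\sup_{0\leq t\leq T}\|\rho(\cdot,t)\|_{L^\infty}$ is bounded by a constant depending only on $\mu,\b,\g,T,E_0$ and $\|\rho\|_{L^\infty}$, so \eqref{ABest} reduces to $\f{d}{dt}A^2+B^2\leq C(1+\|\na u\|_{L^2}^2)(1+A^2)$.

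It then suffices to run Gronwall's inequality: integrating $\f{d}{dt}(e+A^2)\leq C(1+\|\na u\|_{L^2}^2)(e+A^2)$ and using that $\int_0^T(1+\|\na u\|_{L^2}^2)\,dt\leq C$ by \eqref{energyineq}, together with $A^2(0)\leq C$ (finite and bounded by a function of $E_0$, since $u_0\in H^2$ and $\rho_0\in W^{1,q}\hookrightarrow L^\infty$, or more crudely from \eqref{Aest} at $t=0$), yields $\sup_{0\leq t\leq T}(e+A^2(t))\leq C$; reinserting this into \eqref{ABest} and integrating in time gives $\int_0^T B^2(t)\,dt\leq C$. I do not expect a genuine obstacle: the only point needing care is to double-check that the derivation of \eqref{ABest} is indeed insensitive to $\|\rho\|_{L^\infty}$ — that all density factors there were controlled through $R_T$ and \eqref{vkineq} — and that $A^2(0)$ is finite and controlled in terms of $E_0$.
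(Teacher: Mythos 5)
Your proposal is correct and is exactly what the paper intends: it states this proposition without further proof, remarking only that it follows "from the proof of Lemma~\ref{energylem1} and Lemma~\ref{energylem2}," i.e.\ the first line is the energy inequality \eqref{energyineq} verbatim, and the second follows by bounding $R_T$ in \eqref{ABest} by the hypothesized $L^\infty$ bound on $\rho$ and applying Gronwall with $\int_0^T\|\na u\|_{L^2}^2\,dt\leq C$. Your added checks (that all density dependence in the derivation of \eqref{ABest} passes through $R_T$, and that $A^2(0)$ is controlled) are the right ones and present no obstacle.
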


Next, we have the second order estimates as following:
\begin{prop}\label{highest1}
{\sl There exists a constant $C$ depending only on $\mu, \b, \g, T, E_0$ and $\|\rho\|_{L^\infty}$ such that
\beq\label{densityest1}
\sup_{0\leq t\leq T} \|\r^{\f12} \dot{u}(t)\|_{L^2}  +\int_0^T\int  |\nabla \dot{u}|^2\,dx\,dt\leq C.
\eeq}
\end{prop}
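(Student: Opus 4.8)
The plan is to obtain the standard $\dot H^1$-type estimate on $\dot u$ by testing the momentum equation with $\dot u$ itself, exploiting the identity $\rho\dot u=\nabla G+\mu\nabla^\perp\omega$ from \eqref{ueq}. Concretely, I would apply the material-derivative operator $\tfrac{D}{Dt}=\partial_t+u\cdot\nabla$ to $\eqref{CNS}_2$ and then take the $L^2$ inner product with $\dot u$. Writing $\rho\,\tfrac{D}{Dt}(\dot u)=\tfrac{D}{Dt}(\rho\dot u)+\rho\,\dive u\,\dot u$ (using the mass equation $\partial_t\rho+\dive(\rho u)=0$), the left side produces $\tfrac12\tfrac{d}{dt}\int\rho|\dot u|^2\,dx$ plus lower-order terms, and the right side produces, after the usual commutator manipulations for $\nabla\dive$ and $\Delta$ acting through $\tfrac{D}{Dt}$, the dissipation term $\int\big(\mu|\nabla\dot u|^2+(\mu+\lambda)(\dive\dot u)^2\big)\,dx$ together with a collection of cubic error terms. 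These errors are of the schematic forms $\int G\,\nabla u\,\nabla\dot u$, $\int P\,\dive u\,\dive\dot u$, $\int\rho|\dot u|^2|\nabla u|$, and terms involving $\lambda'(\rho)\rho\,\dive u$; one also picks up terms coming from $\tfrac{D}{Dt}\lambda(\rho)=-\beta\lambda(\rho)\dive u$ and from $\tfrac{D}{Dt}P=-\gamma P\,\dive u$.

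The key steps in order: (i) derive the differentiated momentum equation and the resulting energy identity for $\tfrac12\tfrac{d}{dt}\int\rho|\dot u|^2\,dx+\int(\mu|\nabla\dot u|^2+(\mu+\lambda)(\dive\dot u)^2)\,dx=\sum_j J_j$; (ii) bound each $J_j$ using Hölder, the Gagliardo–Nirenberg/Poincaré inequality \eqref{psineq}, the elliptic estimates \eqref{ellipticest1}–\eqref{ellipticest2}, the bound $\|G\|_{L^4}\le C\|G\|_{L^2}^{1/2}\|\nabla G\|_{L^2}^{1/2}$ together with $\|\nabla G\|_{L^2}\le C\|\rho\dot u\|_{L^2}\le C\|\rho^{1/2}\dot u\|_{L^2}$ (here the now-established uniform bound $\|\rho\|_{L^\infty}\le C$ from Proposition \ref{densityprop} is crucial, as is $\sup_t(e+A^2(t))+\int_0^T B^2\,dt\le C$), absorbing every factor of $\|\nabla\dot u\|_{L^2}$ into the dissipation via Young's inequality $ab\le\epsilon a^2+C(\epsilon)b^2$; (iii) the terms without $\nabla\dot u$ reduce, after using \eqref{feiineq}-type control of $\|\dot u\|_{L^2}$ by $\|\rho^{1/2}\dot u\|_{L^2}+\|\nabla\dot u\|_{L^2}$, to $C(1+\|\nabla u\|_{L^4}^4+\|\rho^{1/2}\dot u\|_{L^2}^2)$ with an $L^1_t$ prefactor controlled by the estimates of Section 3; (iv) assemble a differential inequality of the form $\tfrac{d}{dt}\int\rho|\dot u|^2\,dx+\int|\nabla\dot u|^2\,dx\le C\,h(t)\big(1+\int\rho|\dot u|^2\,dx\big)$ with $\int_0^T h(t)\,dt\le C$, and close it by Gronwall, noting $\int\rho_0|\dot u(0)|^2\,dx=\int\rho_0|u_t(0)+u_0\cdot\nabla u_0|^2\,dx$ is finite by the hypotheses $\rho_0\in W^{1,q}$, $u_0\in H^2$ (and the equation gives $\sqrt{\rho_0}\,u_t(0)\in L^2$).

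The step I expect to be the main obstacle is controlling, uniformly in $T<T^*$, the cubic term $\int(\mu+\lambda(\rho))(\dive u)^2\dot u\cdot\nabla u\,dx$ and its relatives arising from the large bulk viscosity $\lambda(\rho)=\rho^\beta$: because $\lambda$ can be large where $\rho$ is large, the naive bound loses powers of $R_T$, and one must route through $G$ rather than $\dive u$ directly (writing $(2\mu+\lambda)\dive u=G+P-\bar P$) and use the $L^4$ bound on $G$ — hence the dependence of the constant on $\|\rho\|_{L^\infty}$ is unavoidable and is exactly what the statement allows. A secondary technical point is handling the extra terms generated by $\tfrac{D}{Dt}\lambda(\rho)$; these are linear in $\dive\dot u$ with coefficient $\sim\lambda(\rho)(\dive u)^2$, and must be absorbed into $(\mu+\lambda)(\dive\dot u)^2$ carefully so that the $\lambda$-weight on both sides matches, rather than into the plain $\mu|\nabla\dot u|^2$ term. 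Once these are dispatched, the Gronwall closure is routine.
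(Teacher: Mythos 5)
Your proposal follows essentially the same route as the paper: apply $\tfrac{D}{Dt}$ to the momentum equation, test with $\dot u$, absorb $\|\nabla\dot u\|_{L^2}$ via Young's inequality, bound the cubic errors by $C(\|\nabla u\|_{L^4}^4+\|\nabla u\|_{L^2}^2)$ using \eqref{uest}, and close with the Section 3 bounds $\sup_t A^2+\int_0^T B^2\,dt\le C$. The only remark is that since the constant is allowed to depend on $\|\rho\|_{L^\infty}$, the bulk-viscosity terms you flag as the main obstacle are handled directly ($\lambda(\rho)\le C$) without needing to route through $G$, which is exactly what the paper does.
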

\begin{proof}Take the material derivative $\f{D}{Dt}$ on the both side of $\eqref{CNS}_2$ to get,
\beq\label{higheq}
\begin{split}
&(\rho \dot{u}_j)_t+\dv(\rho u \dot{u}_j)-\mu\tri\dot{u}_j-\pa_j((\mu+\lambda)\dv \dot{u}) \\
=&\mu\pa_i(-\pa_i u\cdot \nabla u_j+\dv u \pa_i u_j)-\mu \dv(\pa_i u\pa_i u_j)-\pa_j[(\mu+\lambda)\pa_i u\cdot \nabla u_i-(\mu+(1-\beta)\rho^\beta)(\dv u)^2] \\
&-\dv(\pa_j(\mu+\lambda)\dv u)+(\gamma-1)\pa_j(P\dv u)+\dv (P\pa_j u).
\end{split}
\eeq

Then, multiplying $\dot{u}$ on the both sides of \eqref{higheq}, we get that
\beno
&&\f{d}{dt}\int \rho |\dot{u}|^2\,dx+\mu \int|\nabla \dot{u}|^2\,dx+\int(\mu+\lambda)(\dv \dot{u})^2\,dx\\
&\leq& \e \int|\nabla \dot{u}|^2\,dx+C_\e (\|\nabla u\|_{L^4}^4+ \|\nabla u\|_{L^2}^2).
\eeno
Pluging \eqref{uest} into the above inequality, we obtain
\beno
&&\f{d}{dt}\int \rho |\dot{u}|^2\,dx+\mu \int|\nabla \dot{u}|^2\,dx+\int(\mu+\lambda)(\dv \dot{u})^2\,dx \leq  C_\e (A^2 B^2 + \|\nabla u\|_{L^2}^2).
\eeno
Thus, by Proposition \ref{highest1}, the proof is completed.
\end{proof}

\medskip

Next, we compute the higher order estimates for density:
\begin{prop}\label{highest2}
{\sl There exists a constant $C$ depending only on $\mu, \b, \g, T,$ and $E_0, \|\rho\|_{L^\infty}$ such that
\beq\label{densityest}
\sup_{0\leq t\leq T} (\|\rho\|_{ W^{1, q}}+\|\nabla u\|^2_{H^1} ) +\int_0^T  \|\nabla^2 u\|^2_{L^q} \,dt\leq C.
\eeq
where $q\geq 2$. }
\end{prop}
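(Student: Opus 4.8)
The plan is to propagate the $W^{1,q}$ bound on $\rho$ by a Gr\"onwall argument in which the only top-order term, $\rho\,\na\dive u$, is controlled through the effective viscous flux $G$ and the second-order bound on $\dot u$ already obtained in Proposition~\ref{highest1}. First I would differentiate the continuity equation $\eqref{CNS}_1$ to get $\pa_t\na\rho+u\cdot\na\na\rho+\na u\cdot\na\rho+(\dive u)\na\rho+\rho\,\na\dive u=0$, multiply by $q|\na\rho|^{q-2}\na\rho$, integrate over $\T^2$, and use the bound on $\|\rho\|_{L^\infty}$ from Proposition~\ref{densityprop} to obtain
\[
\f{d}{dt}\|\na\rho\|_{L^q}\le C\bigl(1+\|\na u\|_{L^\infty}\bigr)\|\na\rho\|_{L^q}+C\|\na^2 u\|_{L^q}.
\]

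To estimate $\|\na^2 u\|_{L^q}$, recall from $\eqref{ueq}$ that $\tri G=\dive(\rho\dot u)$ and $\mu\tri\omega=\na^{\perp}\cdot(\rho\dot u)$, so by $\eqref{ellipticest1}$ one has $\|\na G\|_{L^r}+\|\na\omega\|_{L^r}\le C\|\rho\dot u\|_{L^r}$ for every $r\in(1,\infty)$. Writing $\dive u=(G+P-\bar P)/(2\mu+\lam)$ gives $\na\dive u=(\na G+\na P)/(2\mu+\lam)-\bigl(\b\rho^{\b-1}/(2\mu+\lam)\bigr)(\dive u)\na\rho$; since $\na P=\g\rho^{\g-1}\na\rho$ with $\rho$ bounded and $\b,\g>1$, the Hodge-type bound $\|\na^2 u\|_{L^q}\le C(\|\na\dive u\|_{L^q}+\|\na\omega\|_{L^q})$ yields $\|\na^2 u\|_{L^q}\le C\|\rho\dot u\|_{L^q}+C(1+\|\dive u\|_{L^\infty})\|\na\rho\|_{L^q}$. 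All the new quantities are then controlled by $\dot u$: applying Lemma~\ref{interpolationlem2} to $\dot u$, the embedding $H^1(\T^2)\hookrightarrow L^r(\T^2)$, and $\|\rho^{1/2}\dot u\|_{L^2}\le C$ from Proposition~\ref{highest1} give $\|\rho\dot u\|_{L^r}\le C\|\dot u\|_{H^1}\le C(1+\|\na\dot u\|_{L^2})$ for every finite $r$; and for a fixed $r>2$, Gagliardo--Nirenberg together with the already-established boundedness of $A(t)$ (hence of $\|G\|_{L^2}$, $\|\omega\|_{L^2}$, $\|\na u\|_{L^2}$ and $|\bar G|$) gives $\|G\|_{L^\infty}+\|\omega\|_{L^\infty}\le C(1+\|\na\dot u\|_{L^2})$, so that $\|\dive u\|_{L^\infty}\le C\|G\|_{L^\infty}+C\le C(1+\|\na\dot u\|_{L^2})$.

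Feeding these into the Beale--Kato--Majda inequality (Lemma~\ref{bkmlem}) gives $\|\na u\|_{L^\infty}\le C(1+\|\na\dot u\|_{L^2})\log(e+\|\na^2 u\|_{L^q})+C$. Setting $\Psi(t)=e+\|\na\rho(t)\|_{L^q}$, the estimate of $\|\na^2 u\|_{L^q}$ above reads $\|\na^2 u\|_{L^q}\le C(1+\|\na\dot u\|_{L^2})\Psi$, whence $\log(e+\|\na^2 u\|_{L^q})\le C(1+\|\na\dot u\|_{L^2})+C\log\Psi$; inserting this into the first displayed inequality and dividing by $\Psi$,
\[
\f{d}{dt}\log\Psi(t)\le C\bigl(1+\|\na\dot u\|_{L^2}^2\bigr)+C\bigl(1+\|\na\dot u\|_{L^2}\bigr)\log\Psi(t).
\]
Because $1+\|\na\dot u\|_{L^2}\in L^2(0,T)\subset L^1(0,T)$ and $1+\|\na\dot u\|_{L^2}^2\in L^1(0,T)$ by Proposition~\ref{highest1}, Gr\"onwall's inequality gives $\sup_{[0,T]}\|\na\rho\|_{L^q}\le C$, i.e.\ $\sup_{[0,T]}\|\rho\|_{W^{1,q}}\le C$. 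Plugging back, $\int_0^T\|\na^2 u\|_{L^q}^2\,dt\le C\int_0^T(1+\|\na\dot u\|_{L^2}^2)\,dt\le C$, and the uniform $H^1$ bound on $\na u$ follows from $\|\na^2 u\|_{L^2}\le C(\|\na\dive u\|_{L^2}+\|\na\omega\|_{L^2})$ estimated as above with $r=2$, the term $(\dive u)\na\rho$ being bounded in $L^2$ by $\|\dive u\|_{L^{2q/(q-2)}}\|\na\rho\|_{L^q}$ with $\|\dive u\|_{L^{2q/(q-2)}}\le C$ obtained by interpolating $G$ between $L^2$ and $\na G\in L^2$.

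The only genuinely delicate point is the term $\rho\,\na\dive u$: it is of top order in $\na^2 u$ and cannot be absorbed, so one must trade it for $\|\rho\dot u\|_{L^q}$ via the viscous-flux identity; the subtlety is that $\|\na\dot u\|_{L^2}$ is only square-integrable in time, not bounded, which is precisely why one cannot afford any positive power of $\|\na^2 u\|_{L^q}$ and must retain the Beale--Kato--Majda logarithm, closing the scheme as the linear (in $\log\Psi$) differential inequality above, whose coefficient and source are merely $L^1$ in time.
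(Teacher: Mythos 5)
Your proposal is correct and follows essentially the same route as the paper: a Gr\"onwall estimate for $\|\na\rho\|_{L^q}$ in which the top-order term is traded for $\|\na G\|_{L^q}\le C\|\rho\dot u\|_{L^q}$ via the effective viscous flux, combined with the Beale--Kato--Majda logarithmic inequality and the $L^2_t$ bound on $\na\dot u$ from Proposition~\ref{highest1}. The only cosmetic difference is that the paper packages the transport computation through $\Phi=(2\mu+\lam(\rho))\na\rho$, whereas you work with $\na\rho$ directly and absorb the resulting $(\dive u)\na\rho$ commutator term into the Gr\"onwall coefficient via an $L^\infty$ bound on $\dive u$; both close in the same way.
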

\begin{proof}
The proof comes from \cite{HL, HL1}. First, we denote $\Phi \triangleq (2\mu+\lambda(\rho))\nabla \rho$, then from the density equation, we have 
\beno
\Phi_t+(u\cdot \nabla)\Phi+(2\mu+\lambda(\rho))\nabla u\cdot \nabla\rho+\rho\nabla(G+P)+\Phi\dv u=0.
\eeno
Multiplying $|\Phi|^{q-2}\Phi$ on the both sides of the above equation and integrating by parts, we obtain
\beq\label{Phi}\begin{split}
\f{d}{dt}\|\Phi\|_{L^q}
&\leq C(1+\|\nabla u\|_{L^\infty})\|\nabla \rho\|_{L^q}+C\|\nabla G\|_{L^q}\\
&\leq C(1+\|\nabla u\|_{L^\infty})\|\nabla \rho\|_{L^q}+C\|\rho\dot{u}\|_{L^q}.
\end{split}\eeq

On one hand, recalling \eqref{ellipticest1}, for any $q>2$ we get
\beo\begin{split}
\|\nabla u\|_{L^\infty}
&\leq C(\|\dv u\|_{L^\infty}+\|\om\|_{L^\infty})\log (1+\|\na^2 u\|_{L^q})+C\|\nabla u\|_{L^2}+C\\
&\leq C(1+\|G\|_{L^\infty}+\|\om\|_{L^\infty})\log (1+\|\na \dv u\|_{L^q}+\|\na \om\|_{L^q})+C\|\nabla u\|_{L^2}+C\\
&\leq C(1+\|\na G\|^{\f{q}{2(q-1)}}_{L^q}+\|\nabla \om\|^{\f{q}{2(q-1)}}_{L^q})\log (1+\|\na(\f{G+P-\overline{P}}{2\mu+\lambda})\|_{L^q}+\|\rho\dot{u}\|_{L^q})+C\|\nabla u\|_{L^2}+C\\
&\leq C(1+\|\rho\dot{u}\|^{\f{q}{2(q-1)}}_{L^q})\log \left((1+\|\dv u\|_{L^\infty})\|\na\rho\|_{L^q}+\|\rho\dot{u}\|_{L^q}\right)+C\|\nabla u\|_{L^2}+C\\
&\leq C(1+\|\rho\dot{u}\|^{\f{q}{2(q-1)}}_{L^q})\log \left(1+\|\na\rho\|_{L^q}+\|\rho\dot{u}\|_{L^q}\right)+C\|\nabla u\|_{L^2}+C\\
&\leq C(1+\|\rho\dot{u}\|_{L^q})\log \left(1+\|\na\rho\|_{L^q}\right)+C\|\nabla u\|_{L^2}+C.
\end{split}\eeo

On the other hand, by Lemma \ref{interpolationlem2}, we have that
\beno
\|\dot{u}\|_{L^2} \leq C( \|\rho^{\f12}\dot{u}\|_{L^2}+ \|\nabla \dot{u}\|_{L^{2}}),
\eeno
which implies that, by Proposition \ref{highest2},
\beno
\|\rho\dot{u}\|_{L^2(0, T; L^q)}\leq C.
\eeno

Pluging above estimate into \eqref{Phi} and applying the Gronwall's inequality, we obtain
\beno
\|\nabla \rho\|_{L^q}\leq C.
\eeno
Thus, we have 
\beno
\|\nabla^2 u\|_{L^q}\leq C(\|\na \dv u\|_{L^q}+\|\na \om\|_{L^q})\leq C(1+\|\nabla\rho\|_{L^q}+\|\rho\dot{u}\|_{L^q}),
\eeno
which implies that
\beno
\int^T_0 \|\nabla^2 u\|^2_{L^q}\,dt+\|\nabla^2 u\|_{L^2} \leq C,
\eeno
which completes the proof of Proposition \ref{highest2}.
\end{proof}

\medskip

{\bf Proof of the Proposition \ref{Blow-upC}:} 
Now we are in the position to prove Proposition \ref{Blow-upC}.
We prove it by the contradiction argument. Assume that $T^*<\infty$ and
$$\sup\limits_{s\in [0,T^*)}\|\rho(s)\|_{L^{\infty}((0, T^*)\times\Om)}<\infty.$$
Then, by Proposition \eqref{highest2}, we have
\beo
\sup_{0\leq t\leq T^*} (\|\rho(t)\|_{ W^{1, q}}+\|\nabla u(t)\|^2_{H^1} ) +\int_0^{T^*}  \|\nabla^2 u\|^2_{L^q} \,dt\leq C.
\eeo
By Lemma \ref{locallem}, we can extend the solution to $[0, T^*+\e]$ for some small data $\e$. Thus, the Proposition \ref{Blow-upC} is proved.

\medskip

{\bf Proof of the Theorem \ref{mainthm1}:}  We prove it by the contradiction argument. Assume that lifespan $T^*<\infty$. Thus, by the Proposition \ref{densityprop}, we have
\beno
\sup\limits_{s\in [0,T^*)}\|\rho(s)\|_{L^{\infty}((0, T^*)\times\Om)}<\infty.
\eeno
Then, applying Proposition \ref{Blow-upC}, we can extend the solution which contradicts with the definition of the $T^*$.

\end{document}